\newtheorem{theorem}{Theorem}[section]
\newtheorem{lemma}[theorem]{Lemma}
\newtheorem{definition}[theorem]{Definition}
\newtheorem{proposition}[theorem]{Proposition}
\newtheorem{corollary}[theorem]{Corollary}
\newtheorem{remark}[theorem]{Remark}
\newcommand{\R}{\mathbb{R}}
\newcommand{\B}{\mathcal{B}}
\newcommand{\N}{\mathbb{N}}
\newcommand{\x}{\overline{\xi}}
\newcommand{\n}{\overline{\eta}}
\newcommand{\dint}{\displaystyle\int}
\numberwithin{equation}{section}
\title[]{A nonautonomous $C^r-$topological equivalence involving contractions and unbounded nonlinearities.}
\author[]{\'Alvaro Casta\~neda}
\address{Universidad de Chile, Departamento de Matem\'aticas. Casilla 653, Santiago, Chile}
\author[]{Fernanda Torres}
\address{Pontificia Universidad Cat\'olica de Chile, Departamento de Matem\'aticas Santiago, Chile}
\email{castaneda@uchile, fztorres@uc.cl}
\subjclass[2020]{37C60, 37B25}
\keywords{Nonautonomus hyperbolicity, Nonautonomous differential equation, Smooth linearization, Unbounded nonlinearities}
\thanks{This research has been supported by FONDECYT Regular 1200653}
\date{\today}
\begin{document}

\maketitle
%\linenumbers

\begin{abstract}
We study the smoothness of the topological equivalence between a linear equation and its nonlinear perturbation, which is regarded as unbounded. To the best of our knowledge, it has not previously been considered such study in the literature. Therefore, the main result of this work copes with this lack, that is, it is shown, on the positive half line, that such topological equivalence is of class $C^r (r \geq 1)$ when the linear part is a uniform contraction and the nonlinearities considered are unbounded.
\end{abstract}

\section{introduction}

A fundamental tool for the study of the local behavior of nonlinear dynamical systems is Hartman–Grobman’s Theorem \cite[Theorem I]{Hartman1}. This result establishes the existence of a local topological conjugacy between the solutions of a nonlinear system with its linearization around an hyperbolic equilibrium, that is, the dynamics are topologically the same in a neighborhood of the equilibrium point. The analysis of the global behavior of nonlinear systems begins when C. Pugh \cite{Pugh} studied a particular case of the Linearization Theorem focused on linear systems with bounded and Lipschitz perturbations, allowing the construction of a global homeomorphism.

\subsection{Nonautonomous Linearization} Since K.J. Palmer in \cite{Palmer}, inspired by the global approach carried out by C. Pugh, introduced the nonautonomous linearization theorem for differentiable systems, the goal of showing that such linearization has some class of differentiability has been of wide interest. This goal has gained prominence in the last decade as a result of a large number of works that have dealt with this topic from a variety of perspectives. To the best of our knowledge, the first ones to tackle this problem were \'A. Casta\~neda and G. Robledo in \cite{JDE}, who, under some integrability conditions and without the use of spectral theory, showed that the linearization is of class $C^2$ when the linear part is a uniform contraction on $\mathbb{R}$. A similar result in this context was obtained by the same previous authors, jointly with P. Monz\'on in \cite{Monzon1}, who show that the linearization is of class $C^r \, (r \geq 1)$ when the linear part has general nonuniform contraction on $\mathbb{R}^+$.

A first result in terms of the differentiability of the linearization considering contraction and expansion, and regarding spectral theory was obtained by D. Dragi\v{c}evi\'c \textit{et al.} in \cite{Dragicevic2}; in this article, it is shown the differentiability of the linearization assuming that the linear part has a strong nonuniform exponential dichotomy, that is, the linear system has the property of nonuniform exponential dichotomy and the transition matrix has the property of nonuniform bounded growth on $\mathbb{R}^+$. Later, in \cite{CMR} the authors introduce the concept of \textit{continuous topological equivalence} which, roughly speaking, requires the continuity of the topological equivalence with respect to both variables time and space; and they prove, without spectral theory, that if the linear system has the property of uniform contraction, then the linearization is $C^r (r \geq 1)$ and the partial derivatives of this linearization are continuous with respect to both variables time and space. Recently in \cite{Jara}, N. Jara proves, without using spectral theory, that if the linear system has a general nonuniform dichotomy and the nonlinear part has restrictive assumptions; in compensation for avoiding spectral theory, then the linearization is of class $C^2$ on $\mathbb{R}^+$.

Notice that in the works previously mentioned, the global linearization considered is between a linear system and an additive perturbation of itself by a nonlinearity that is Lipschitz and bounded. Therefore, results of linearization in which it can consider unbounded nonlinearities have attracted the attention of several authors; a first result in this line is obtained by F. Lin in \cite{Lin2} who showed that if the linear system is a uniform contraction on $\mathbb{R}$ then the linearization is of class $C^0.$ In order to obtain his result, F. Lin uses the concept of \textit{almost reducibility}, \textit{i.e}, the linear system can be written as a linear diagonal system perturbed by a bounded linear term where the diagonal part is contained in the spectrum associated with the uniform hyperbolicity, and it uses the concept of crossing times with respect to the unit sphere. A second approach that deals with the problem of linearization when the nonlinearity is unbounded is carried out by I. Huerta in \cite{Huerta}, the author generalizes the previously mentioned work of F. Lin to a nonuniform framework, that is, it is constructed a linearization of class $C^0$ in two steps: the first one considers to write the linear system on $\mathbb{R}^+$ as in Lin's case where the diagonal part lies in the nonuniform spectrum, and the second step is devoted to construct a suitable Lyapunov function that plays the role of crossing time with respect to the unit sphere.

\subsection{Novelty of the article} To our knowledge, it has not yet been addressed the study about the smoothness of the linearization between a linear system with some nonautonomous hyperbolicity and a perturbation of itself by an unbounded nonlinearity. The main goal of this work is to give a first result in this direction, considering that the linear system admits uniform contraction $\mathbb{R}^+$. Namely, by one hand, avoiding the concepts of reducibility and spectral theory, we prove that the linearization of F. Lin \cite{Lin2} is of class $C^r$ ($r \geq 1$); on the other hand, we improve the work \cite{CMR} in terms that the nonlinear perturbation is unbounded.

\subsection{Structure of this article} Section 2 gives a general setting in terms of the properties and results that we use in this work. The fact that the linear system and its nonlinear perturbation are topologically equivalent is established in Section 3; additionally, in this section, we show properties that are verified for the functions that play a role in topological equivalence. Recall that the function that plays the role of topological equivalence can be written as the identity perturbed by a term that can be seen as a solution to the initial value problem; thus, in this section, we give properties of these perturbations in order that they are compatible with the definition of topological equivalence that we use in this work. In Section 4, we show that the functions associated with the topological equivalence are continuous on $(t,x) \in \mathbb{R}^+ \times \mathbb{R}^n.$ The Section 5 is devoted to the differentiability of the topological equivalence, that is, the partial derivatives up to order $r \geq 1$ of the functions that play the role of the topological equivalence are continuous on $(t,x) \in \mathbb{R}^+ \times \mathbb{R}^n.$

\subsection{Notations}
Throughout this paper, $||\cdot||$ and $|\cdot|$ will denote matrix and vector norms respectively. The set $[0,+\infty)$ is denoted by $\mathbb{R}^{+}$ and the set of square 
$n\times n$ matrices with real coefficients is denoted by $\mathcal{M}_{n}$, while $I_{n}$ is the identity matrix.

\section{Preliminary results and contextualization}

In this work we consider the following systems

    \begin{equation}\label{lin}
        x'=A(t)x
    \end{equation}
    and 
    \begin{equation}\label{nolin}
        y'=A(t)y+f(t,y).
    \end{equation}
    \vspace{0.4cm}
    
   We denote by $t\mapsto x(t,\tau,\xi)$ and $t\to y(t,\tau,\eta)$ the solutions of (\ref{lin}) and (\ref{nolin}) that pass through $\xi$ and $\eta$ respectively in $t=\tau$. We also denote by $\Phi(t,s)$ the transition matrix of (\ref{lin}) such that for $t=s$ is $I_n$. Moreover, $A:\mathbb{R}^+\to \mathcal{M}_{n}(\mathbb{R})$ is continuous and uniformly bounded, that is, there exists $M> 1$ such that
    \begin{equation}\label{acotamiento-A}
     \sup_{s \in \mathbb{R}^+}\norm{A(s)}=M < +\infty. 
    \end{equation}

    Moreover, the following properties are satisfied: 
    \vspace{0.3cm}

    \textbf{(P1)} The system (\ref{lin}) is  uniformly exponentially stable, that is, there exist constants $K\geq0$ and $\alpha>0$ such that $\Phi(t,s)$ verifies

    \begin{equation*}
        \|\Phi(t,s)\|\leq Ke^{-\alpha(t-s)} \hspace{0.3cm} \text{for any}\  t \geq s \geq 0.
    \end{equation*}
    
    \vspace{0.3cm}
    \textbf{(P2)} The function $f$ is continuous in $(t,y)$ and for any $t\geq0$ and for all $(y,\overline{y}) \in \R^n\times\R^n$ there exists $\gamma \geq 0$ such that
    
    \begin{equation*}
        |f(t,y)-f(t,\overline{y})|\leq \gamma |y-\overline{y}|.\hspace{0.3cm} 
    \end{equation*}

    \vspace{0.2cm}

%     \textcolor{red}{    \textbf{(P3)} $\displaystyle \frac{K \gamma}{\alpha} < 1.$}

      \vspace{0.2cm}

 \textbf{(P3)} There exists $\mu \geq 0$ such that $\displaystyle \sup_{t \geq 0}|f(t,0)|\leq \mu$ and $f$ is bounded on $t$ for any $x \in \R^n.$

  \vspace{0.2cm}
    
    \textbf{(P4)}  $|f(t,x)| \rightarrow +\infty$ as $|x| \rightarrow +\infty,$ for any fixed $t \geq 0.$

     \vspace{0.2cm}

    \vspace{0.2cm}
    \textbf{(P5)} The function $f(t,x)$ and its derivatives with respect to $x$ up to order $r$-th are continuous functions  of $(t,x)$.
    
    \begin{remark}
    The property $\textnormal{\textbf{(P1)}}$ says that the linear system (\ref{lin}) is contractive in the sense that the system has the uniform exponential dichotomy (nonuniform hyperbolicity) with projection $P(t) = I.$ Recall that (\ref{lin}) has the property of exponential dichotomy on $\R^+$ if there exist a projection $P(t)$, constants $K\geq 1$, $\alpha>0$  such that
\begin{displaymath}
\left\{\begin{array}{rcl}
%\label{nouniforme}
\left \| \Phi(t,s)P(t) \right \|&\leq& K\exp(-\alpha(t-s)),\quad t\geq s, \quad t, s \in \R^+,                      \\\\
\left \| \Phi(t,s)(I-P(t)) \right \|&\leq& K\exp(\alpha(t-s)),\quad t\leq s, \quad t, s \in \R^+.
\end{array}\right.
\end{displaymath}
    
    \end{remark}

    The following result allows us to establish bounded solutions for systems of equations that rely on a parameter in an arbitrary Banach space, in particular, the solutions of the system (\ref{nolin}) are bounded. Additionally, it is analogous to those presented in \cite{CR2018, Huerta} on an uniform discrete nonautonomous framework and on a nonuniform continuous context, respectively.

\begin{proposition}\label{10}
    Consider the nonlinear perturbation
\begin{equation}\label{nonlinaux}
    x'=A(t)x+g(t,x(t),\eta)
\end{equation}
where $g:\R^+\times \R^n \times \mathcal{B}\to \R^n$ is a continuous function and $\mathcal{B}$ is a Banach space. Moreover, suppose that $g$ satisfies the following conditions:

\begin{enumerate}
    \item $g(t,x,\eta)$ is bounded with respect to $t,$ for any fixed $x \in \R^n$ and any fixed $\eta \in \B.$
    
    \item There exists $\gamma_g>0$ such that $|g(t,x_1,\eta)-g(t,x_2,\eta)|\leq \gamma_g|x_1-x_2|$, for any $t\in \R^+$ and $\eta \in \B$.
    \item $K_0=\displaystyle \sup_{t \in \R^+, \eta \in \mathcal{B}}|g(t,0,\eta)|<+\infty$.
\end{enumerate}
If $K\gamma_g<\alpha$, then for any fixed $\eta \in \B$, the system (\ref{nonlinaux}) has a unique bounded solution $X(t,\eta)$ given by

\begin{equation*}
    X(t,\eta)=\dint_0^t\Phi(t,s)g(s,X(s,\eta),\eta)ds
\end{equation*}

where \(\displaystyle \sup_{t\in \R^+}|X(t,\eta)|<+\infty\).
\end{proposition}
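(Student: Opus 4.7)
The plan is to apply the Banach fixed point theorem in an appropriate space of bounded continuous functions. For $\eta \in \B$ fixed, let $\mathcal{X}$ denote the Banach space of bounded continuous maps $X:\R^+\to\R^n$ endowed with the supremum norm $\|X\|_\infty = \sup_{t\in\R^+}|X(t)|$, and define the operator $T_\eta : \mathcal{X}\to \mathcal{X}$ by
\[
T_\eta[X](t) = \int_0^t \Phi(t,s)\, g(s,X(s),\eta)\, ds.
\]
The proof then reduces to three steps: (i) show $T_\eta$ maps $\mathcal{X}$ into itself, (ii) show it is a contraction, and (iii) verify its unique fixed point is a classical solution of (\ref{nonlinaux}).

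For (i), by the Lipschitz hypothesis (2) and the bound (3), the integrand satisfies $|g(s,X(s),\eta)| \leq K_0 + \gamma_g\|X\|_\infty$, so applying \textbf{(P1)} yields
\[
|T_\eta[X](t)| \leq K(K_0+\gamma_g\|X\|_\infty)\int_0^t e^{-\alpha(t-s)}\,ds \leq \frac{K(K_0+\gamma_g\|X\|_\infty)}{\alpha},
\]
uniformly in $t\geq 0$; continuity of $t\mapsto T_\eta[X](t)$ follows from continuity of $\Phi$ and $g$. For (ii), the same exponential bound combined with the Lipschitz estimate gives
\[
|T_\eta[X_1](t)-T_\eta[X_2](t)| \leq K\gamma_g\|X_1-X_2\|_\infty\int_0^t e^{-\alpha(t-s)}\,ds \leq \frac{K\gamma_g}{\alpha}\|X_1-X_2\|_\infty,
\]
and the assumption $K\gamma_g<\alpha$ makes the constant strictly less than one. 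The Banach fixed point theorem then yields a unique $X(\cdot,\eta)\in\mathcal{X}$ with $X(t,\eta)=T_\eta[X(\cdot,\eta)](t)$, and the bound from (i) furnishes $\sup_t|X(t,\eta)|<+\infty$.

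For (iii), one differentiates the fixed-point identity using $\partial_t\Phi(t,s) = A(t)\Phi(t,s)$ and $\Phi(t,t)=I_n$, obtaining $X'(t,\eta) = A(t)X(t,\eta) + g(t,X(t,\eta),\eta)$ in the classical sense, so $X(\cdot,\eta)$ solves (\ref{nonlinaux}). The mildly delicate point is that the Banach fixed-point argument only delivers uniqueness within the integral formulation; the bounded-solution uniqueness claim should be read as uniqueness of solutions of the associated integral equation (equivalently, of bounded solutions passing through $X(0,\eta)=0$), rather than of arbitrary bounded trajectories of the ODE. Overall no step is truly hard—the only care is in verifying that the Lipschitz and boundedness hypotheses transfer smoothly through the convolution with $\Phi(t,s)$, which is exactly where the contraction condition $K\gamma_g<\alpha$ enters.
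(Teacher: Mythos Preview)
Your proof is correct and follows the standard Banach fixed-point argument in $BC(\R^+,\R^n)$; the paper itself does not give a proof but simply refers to \cite[Proposition~3]{Huerta}, where the same contraction-mapping approach is carried out (in a slightly more general nonuniform setting). Your remark on the scope of the uniqueness claim---that it pertains to bounded solutions of the integral equation, equivalently those with $X(0,\eta)=0$---is accurate and worth keeping, since the statement as written could otherwise be misread.
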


\begin{proof}
See \cite[Proposition 3]{Huerta}.
\end{proof}

\begin{remark}\label{para los betas}
    In particular, we can notice that all solutions of (\ref{nolin}) are bounded as a consequence of Proposition \ref{10}, \textit{i.e.}, $\displaystyle \sup_{t\in \R^+}|y(t,\tau,\eta)|<+\infty$.
\end{remark}

The next proposition and corollary are classical in the literature of differential systems in terms of local continuity with respect to the initial conditions. In \cite[Proposition 2]{Lin2} one can find the proof of these facts.

\begin{proposition}\label{1}
Let  $h:\R^+\times \R^n \to \R^n$ be a continuous function that satisfies the Lipschitz property with constant  $\gamma_h$. Then, for any $t, s \in \mathbb{R^{+}},$ the solution $y(t,s,\eta)$ of (\ref{nolin}) with $y(s,s,\eta)=\eta$ verifies 

\begin{equation*}
    \dfrac{1}{K}|\eta-\overline{\eta}|e^{(-\alpha(t-s)+K\gamma_h|t-s|)}\leq |y(t,s,\eta)-y(t,s,\overline{\eta})|\leq K|\eta-\overline{\eta}|e^{(\alpha(t-s)-K\gamma_h|t-s|)}.
\end{equation*}
\end{proposition}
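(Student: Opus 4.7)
The natural route is to bring the difference of solutions into variation-of-constants form, apply Gr\"onwall's inequality to obtain the upper bound, and then recover the matching lower bound from the cocycle identity of the nonlinear flow.

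First I would set $z(t):=y(t,s,\eta)-y(t,s,\overline{\eta})$ and, assuming $t\geq s$, write
\[
z(t)=\Phi(t,s)(\eta-\overline{\eta})+\int_s^t \Phi(t,u)\bigl[h(u,y(u,s,\eta))-h(u,y(u,s,\overline{\eta}))\bigr]\,du.
\]
Taking norms and invoking (P1) together with the Lipschitz hypothesis on $h$ gives
\[
|z(t)|\leq Ke^{-\alpha(t-s)}|\eta-\overline{\eta}|+K\gamma_h\int_s^t e^{-\alpha(t-u)}|z(u)|\,du,
\]
after which multiplying by $e^{\alpha(t-s)}$ reduces the problem to a scalar integral inequality for $u\mapsto e^{\alpha(u-s)}|z(u)|$ to which Gr\"onwall's lemma applies directly. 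This immediately yields $|z(t)|\leq K|\eta-\overline{\eta}|e^{(-\alpha+K\gamma_h)(t-s)}$, which is the claimed upper estimate in the regime $t\geq s$.

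For the lower estimate when $t\geq s$, my plan is to exploit the cocycle identity $\eta=y(s,t,y(t,s,\eta))$. Running the nonlinear flow backward from $t$ to $s$, the analogous variation-of-constants identity together with a Gr\"onwall-type argument---in which the uniform bound $\|A(t)\|\leq M$ from (\ref{acotamiento-A}) is needed to control $\Phi$ in the direction where (P1) is not directly applicable---produces a bound of the form $|\eta-\overline{\eta}|\leq K|z(t)|e^{(\alpha-K\gamma_h)(t-s)}$. Rearranging gives the desired left-hand inequality. The regime $t\leq s$ is then handled by interchanging the roles of $t$ and $s$ in the two arguments above, which is why the exponents in the statement involve $|t-s|$ in the $\gamma_h$-term.

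The main obstacle is precisely this lower bound: since (P1) controls $\Phi$ only in forward time, a naive backward Gr\"onwall argument fails, and the forward/backward asymmetry must be absorbed either through the cocycle identity of the nonlinear flow, as sketched above, or through an invertibility argument on $\Phi$ combined with the uniform boundedness of $A$. Once this asymmetry is accounted for, the remaining steps are standard Gr\"onwall-type estimates using the Lipschitz hypothesis on $h$ and the exponential decay of $\Phi$ given by (P1).
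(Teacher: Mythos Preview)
The paper does not supply its own proof of this proposition; it simply refers the reader to \cite[Proposition~2]{Lin2}. Your sketch is the standard route---variation of constants plus Gr\"onwall for the upper estimate, then the flow cocycle $y(s,t,y(t,s,\eta))=\eta$ to convert an upper bound in one time-direction into a lower bound in the other---and is sound in outline.

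Two remarks. First, the upper exponent you obtain for $t\geq s$ is $(-\alpha+K\gamma_h)(t-s)$, whereas the paper's displayed inequality reads $\alpha(t-s)-K\gamma_h|t-s|=(\alpha-K\gamma_h)(t-s)$ in that regime; your bound is sharper and certainly implies the stated one, so this discrepancy is harmless. Second, your plan for the lower bound invokes $\|A(t)\|\leq M$ to control $\Phi$ in the backward direction, but then the constants you actually produce will involve $M$ rather than the pair $(K,\alpha)$ that appears in the statement; indeed, the lower inequality exactly as printed would require a lower bound on $|\Phi(t,s)v|$ that \textbf{(P1)} alone does not furnish. This is most likely a minor imprecision in how the paper transcribed Lin's result, and it is immaterial for the applications made later in the paper, which use only the qualitative conclusion that $|y(t,s,\eta)|\to\infty$ as $|\eta|\to\infty$.
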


\begin{corollary}\label{100}
Under the hypothesis of Proposition \ref{1} with 
$h(t,x)=A(t)x$  we have the solutions $x(t,s,\xi)$ of the system (\ref{lin})  with $x(s,s,\xi)=\xi$ satisfy

\begin{equation}\label{cor2.4}
|\xi-\overline{\xi}|e^{-M|t-s|}\leq |x(t,s,\xi)-x(t,s,\overline{\xi})|\leq |\xi-\overline{\xi}|e^{M|t-s|}.
\end{equation}

In particular, if $\overline{\xi}=0$, 

$$|\xi|e^{-M|t-s|}\leq |x(t,s,\xi)|\leq |\xi|e^{M|t-s|}.$$
\end{corollary}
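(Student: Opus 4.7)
The plan is to reduce the inequality to a direct Gronwall-type estimate on the norm of the difference of two solutions of (\ref{lin}). This is in the spirit of Proposition \ref{1} specialized to the vector field $h(t,x)=A(t)x$, whose Lipschitz constant in $x$ is $M$ by the uniform bound (\ref{acotamiento-A}); however, to obtain the clean form of the bound stated (with $M$ rather than with the dichotomy constants $K,\alpha$), I would redo the argument from scratch rather than quote the proposition verbatim.

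First I would set $z(t)=x(t,s,\xi)-x(t,s,\overline{\xi})$. By linearity of (\ref{lin}), $z$ satisfies $z'=A(t)z$ with $z(s)=\xi-\overline{\xi}$. Assuming $\xi\neq\overline{\xi}$ (the equality case is trivial), uniqueness for linear ODEs guarantees $z(t)\neq 0$ for all $t\in\R^{+}$, so $|z(\cdot)|$ is differentiable and Cauchy--Schwarz together with (\ref{acotamiento-A}) yields
$$\left|\frac{d}{dt}|z(t)|\right|=\left|\frac{\langle z(t),A(t)z(t)\rangle}{|z(t)|}\right|\le \|A(t)\|\,|z(t)|\le M|z(t)|.$$

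Next I would integrate the two-sided inequality $-M|z(t)|\le \frac{d}{dt}|z(t)|\le M|z(t)|$ over the interval with endpoints $s$ and $t$, taking care of the orientation so that $|t-s|$ appears in the exponent. This produces $|\xi-\overline{\xi}|e^{-M|t-s|}\le |z(t)|\le |\xi-\overline{\xi}|e^{M|t-s|}$, which is the main inequality (\ref{cor2.4}). The particular case $\overline{\xi}=0$ is immediate from uniqueness, since the zero function solves (\ref{lin}) with initial value $0$ at $t=s$, so $x(t,s,0)\equiv 0$.

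There is essentially no obstacle in this proof; the only subtleties are the justification of the differentiability of $|z(t)|$ (handled by ruling out the case $z=0$ via uniqueness) and the correct handling of the sign of $t-s$ when integrating, so that both the upper and the lower bound are expressed with $|t-s|$ rather than $t-s$.
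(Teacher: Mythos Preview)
Your argument is correct. The paper does not spell out a proof of this corollary at all; it simply presents it as the specialization of Proposition~\ref{1} (whose proof is delegated to \cite[Proposition~2]{Lin2}) to the vector field $h(t,x)=A(t)x$ with Lipschitz constant $\gamma_h=M$. You rightly notice that quoting Proposition~\ref{1} verbatim would leave the dichotomy constants $K,\alpha$ in the exponent rather than the clean $M$, and you fix this by redoing the underlying Gronwall estimate directly on $|z(t)|$ with $z=x(\cdot,s,\xi)-x(\cdot,s,\overline{\xi})$. That is exactly the computation that sits behind Proposition~\ref{1} in \cite{Lin2}, so the two routes coincide in substance; your version simply bypasses the intermediate statement and lands on the sharp constant immediately. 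The only cosmetic remark is that your Cauchy--Schwarz step tacitly assumes the Euclidean norm; for an arbitrary norm one would phrase the same bound via the one-sided derivative $D^{+}|z(t)|\le |z'(t)|\le M|z(t)|$, which yields the identical conclusion.
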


\section{Topological equivalence}

 In \cite{Palmer} K.J. Palmer introduced the concept of topological equivalence between systems (\ref{lin}) and (\ref{nolin}). Roughly speaking, it is required that there exist a homeomorphism that carries solutions of linear systems into nonlinear systems and vice versa. Later, F. Lin in \cite{Lin2} presents a weaker definition of this concept of topological equivalence, which is exhibited in this work.  
    
    \begin{definition}\label{TopEq}
    The systems (\ref{lin}) and (\ref{nolin}) are $\R^{+}-$ topologically equivalent if there is a function $H:\R^+\times \mathbb{R}^n\to \mathbb{R}^n$ that satisfies
\begin{itemize}
    \item [(i)] If $x(t)$ is solution of (\ref{lin}), then $H[t,x(t)]$ is solution of (\ref{nolin});
    \item [(ii)] $H(t,x) \to H(t,x_0)$ as $x \to x_0$, uniformly with respect to $t$;
    \item[(iii)] $|H(t,x)| \to +\infty$ as $|x| \to +\infty$, uniformly with respect to $t$;
    \item [(iv)] for every fixed $\tau \in \R^+$, the map $u\mapsto H(\tau,u)$ is an homeomorphism of $\mathbb{R}^n$.
\end{itemize}
Moreover, the function $u\mapsto G(\tau,u)=H^{-1}(\tau,u)$ verifies conditions ii), iii) and iv) and maps solutions of (\ref{nolin}) on solutions of (\ref{lin}).
    \end{definition}

In \cite[Proof Theorem 1]{CMR} the authors adapted to $\mathbb{R}^+$ the construction of the homeomorphisms of the topological equivalence defined by Palmer in \cite{Palmer}. For the purpose of this work, we give some details of such a construction due to the same homeomorphisms that it will use in the context of the previous definition. Namely, assuming $K \gamma < \alpha,$ it has that, by one hand

\begin{equation}
\label{Homeo-H}
\begin{array}{rcl}
H(t,\xi)&:=&\displaystyle  \xi+\int_{0}^{t}\Phi(t,s)f(s,x(s,t,\xi)+z^{*}(s;(t,\xi)))\,ds \\\\
&=&\xi + z^{*}(t;(t,\xi)),
\end{array}
\end{equation}
where 
\begin{equation}\label{z^+}
z^{*}(t;(\tau,\xi))=\int_{0}^{t}\Phi(t,s)f(s,x(s,\tau,\xi)+z^{*}(s;(\tau,\xi))) \, ds
\end{equation}
is the unique solution of 
\begin{equation}
\label{pivote1}
\left\{\begin{array}{rcl}
z'&=&A(t)z+f(t,x(t,\tau,\xi)+z)\\
z(0)&=& 0.
\end{array}\right.
\end{equation}

Notice due to $K \gamma < \alpha$ and \textnormal{\textbf{(P1)--(P3)}}, $z^*(t;(t, \xi))$ is the fixed point of the operator $$\Gamma_{(\tau, \xi)} \colon BC(\mathbb{R}^+, \mathbb{R}^n) \to BC(\mathbb{R}^{+}, \mathbb{R}^n) ,$$ which is defined for any couple $(\tau, \xi) \in \mathbb{R}^+ \times \mathbb{R}^n$ as follows 
\begin{equation}
\phi \mapsto \Gamma_{(\tau, \xi)} \phi := \displaystyle \int_0^t \Phi(t,s) f(s,x(s,\tau,\xi) + \phi) \, ds,
\end{equation}
  where $BC(\mathbb{R}^+, \mathbb{R}^n)$ be the Banach space of bounded continuous functions with the supremum norm. Additionally, $z^*(t;(t, \xi))$ can be written as the uniform limit on $\R^+$ of the sequence $z^*_j(t;(t,\xi))$ defined recursively as follows: 

\begin{equation}\label{induccion}
\left\{\begin{array}{rcl}
z_{j+1}^{*}(t;(t,\xi))&=& \displaystyle \int_{0}^{t}\Phi(t,s)f(s,x(s,t,\xi)+z_{j}^{*}(s;(t,\xi))) \, ds \quad \textnormal{for any $j\geq 0$},\\\\
z_{0}^{*}(t;(t,\xi)) &=&    \displaystyle \int_0^t\Phi(t,s)f(s,x(s,t,\xi)) \, ds.
\end{array}\right.
\end{equation}

\begin{remark}
Recall that $z_{0}^{*}(r;(t,\xi))$ is solution of the system
\begin{equation*}
\left\{\begin{array}{rcl}
z'&=& A(t)z+f(t,x(r,t,\xi))\\ 
z(0)&=& 0,
\end{array}\right.
\end{equation*}
$z_{1}^{*}(r;(t,\xi))$ is solution of the system
\begin{equation*}
\left\{\begin{array}{rcl}
z'&=& A(t)z+f(t,x(r,t,\xi)+z_{0}^{*})\\ 
z(0)&=& 0,
\end{array}\right.
\end{equation*}
and so on; it means, for each $z_{l}^{*}(r;(t,\xi)),$ with $l=1,2, \ldots,$ is solution of the system
\begin{equation*}
\left\{\begin{array}{rcl}
z'&=& A(t)z+f(t,x(r,t,\xi)+z_{l-1}^{*})\\ 
z(0)&=& 0.
\end{array}\right.
\end{equation*}
Thus, for the purpose of the construction of the map $H,$ we focus in the solution $z_{l}^{*}(r;(t,\xi))$ at $r = t.$
\end{remark}

On the other hand,

\begin{equation}
\label{Homeo-G}
\begin{array}{rcl}
G(t,\eta)&:=&\displaystyle \eta -\int_{0}^{t}\Phi(t,s)f(s,y(s,t,\eta))\,ds \\\\
&=&\eta+w^{*}(t;(t,\eta)).
\end{array}
\end{equation}
where
\begin{equation*}
%\label{w-star}
w^{*}(t;(t,\eta))=-\int_{0}^{t}\Phi(t,s)f(s,y(s,t,\eta))\,ds
\end{equation*}
is the solution of
\begin{equation}
\label{pivote2}
\left\{\begin{array}{rcl}
w'&=&A(t)w-f(t,y(t,\tau,\eta))\\
w(0)&=& 0.
\end{array}\right.
\end{equation}

Additionally, the function $H$ and $G$ have the following relation in terms of solution of (\ref{lin}) and (\ref{nolin}). 

\begin{equation*}
%\label{conj1}
H[t, x(t,\tau, \xi)] = y(t,\tau, H(\tau,\xi)),
\end{equation*}
and
\begin{equation*}
%\label{conj2}
G[t, y(t,\tau, \eta)] = x(t,\tau, G(\tau,\eta)) = \Phi(t, \tau)G(\tau,\eta).
\end{equation*}

\medskip

We emphasize that we will use, in the context of the Definition \ref{TopEq}, the same functions $H$ and $G$ previously mentioned, in order to establish our main results. Nevertheless, firstly, we will set some facts in terms of   $|z^*(t;(t,\xi))|$ and $|w^*(t;(t,\eta))|,$ which are considered in the construction of (\ref{Homeo-H}) and (\ref{Homeo-G}), respectively.

Additionally, throughout this section we will introduce the following property:

\begin{itemize}
    \item[\textbf{(N)}] For any $j \in \mathbb{N}$ and for any fixed $s,t \in \mathbb{R}^{+},$ 
$$|x(s,t,\xi)+z_{j}^{*}(s;(t,\xi))| \to \infty \,\, \textnormal{as} \,\, |\xi| \to \infty.$$
\end{itemize}

\bigskip

\begin{lemma}\label{lema infinito}
Under the hypothesis \textnormal{\textbf{(P1)-(P4)}}, \textnormal{\textbf{(N)}} and considering $K\gamma<\alpha,$ the solutions of the systems (\ref{pivote1}), (\ref{induccion}) and (\ref{pivote2}) satisfy, respectively:
\begin{itemize}
    \item[i)] $|z^*(t;(t,\xi))| \to +\infty$ and $|z_j^*(t;(t,\xi))| \to +\infty$ as $|\xi|\to +\infty,$ uniformly with respect to $t$.
    \item[ii)] $|w^*(t;(t,\eta))| \to +\infty$ as $|\eta|\to +\infty,$ uniformly with respect to $t$.
\end{itemize}

\end{lemma}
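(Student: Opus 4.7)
My plan is to treat (i) and (ii) in parallel: each claim presents the quantity under study as an integral, and I must show that divergence of the integrand's norm transfers to divergence of the integral. The divergence of the argument of $f$ will come from (P4), Corollary~\ref{100}, hypothesis \textbf{(N)} in (i), and Proposition~\ref{1} combined with Remark~\ref{para los betas} in (ii).

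For part (i), I would proceed by induction on $j$. In the base case $j=0$, write
$$z_0^*(t;(t,\xi)) = \int_0^t \Phi(t,s) f(s, x(s,t,\xi)) \, ds;$$
Corollary~\ref{100} gives $|x(s,t,\xi)| \ge |\xi|\, e^{-M(t-s)}$, so the argument of $f$ diverges with $|\xi|$ for each fixed $s,t$, and (P4) then forces the integrand to diverge in norm. In the inductive step, the argument of $f$ in (\ref{induccion}) is $x(s,t,\xi) + z_j^*(s;(t,\xi))$, which by \textbf{(N)} diverges with $|\xi|$, and (P4) gives divergence of the new integrand. Once every $|z_j^*(t;(t,\xi))|$ is shown to tend to infinity, the conclusion for $z^*$ follows by passing to the uniform limit $z_j^* \to z^*$ on $\R^+$ recorded after (\ref{induccion}).

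For part (ii), I apply Proposition~\ref{1} to $y(s,t,\cdot)$ with the time arguments swapped: taking $\overline{\eta}=0$ and noting $s\le t$, the lower bound reads
$$|y(s,t,\eta) - y(s,t,0)| \ge \frac{|\eta|}{K}\, e^{(\alpha + K\gamma)(t-s)}.$$
The trajectory $y(s,t,0)$ is bounded on $[0,t]$ by (P2), (P3), and Gr\"onwall (or equivalently the Proposition~\ref{10} bounds applied backwards), so $|y(s,t,\eta)| \to +\infty$ as $|\eta| \to +\infty$ for fixed $s,t$. Property (P4) then gives $|f(s,y(s,t,\eta))| \to +\infty$, and the integral $w^*(t;(t,\eta)) = -\int_0^t \Phi(t,s) f(s,y(s,t,\eta))\,ds$ is treated exactly as in part (i).

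The main obstacle in both parts is transferring the pointwise (in $s$) blow-up of a vector-valued integrand into blow-up in norm of the integral, since in principle directional cancellations could defeat this. I expect to handle it by localizing near the diagonal $s=t$: on a short window $[t-\delta,t]$, continuity of $\Phi$ and $f$ (from (P5)) combined with the Lipschitz estimate (P2) show that $\Phi(t,s) f(s,x(s,t,\xi))$ stays close to $f(t,\xi)$ modulo an error controlled by $|\xi|$, so the window contribution has norm comparable to $\delta\, |f(t,\xi)|$, which diverges by (P4); the tail on $[0,t-\delta]$ is then controlled using (P2), the exponential decay of $\|\Phi(t,s)\|$ from (P1), and the a priori bounds from Proposition~\ref{10}. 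The uniformity in $t$ is the subtlest point, since $z_j^*(0;(0,\xi))=0$ for every $\xi$ forces such uniformity to hold only on intervals bounded away from $t=0$, and I would interpret the statement in that sense.
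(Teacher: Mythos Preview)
Your overall skeleton---induction on $j$ for the $z_j^*$, invoking Corollary~\ref{100}, \textbf{(P4)}, and \textbf{(N)} for the integrand, and Proposition~\ref{1} for $y(s,t,\eta)$ in part (ii)---matches the paper's approach closely. The paper's proof is considerably terser than yours: it simply asserts that divergence of the argument of $f$ plus \textbf{(P4)} forces divergence of the integral, without discussing the cancellation issue you flag or attempting any localization near $s=t$. Your caution on that point is more than the paper offers.

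There is, however, a genuine gap in your treatment of $z^*$ itself. You propose to deduce $|z^*(t;(t,\xi))|\to\infty$ from the corresponding statement for each $z_j^*$ by ``passing to the uniform limit $z_j^*\to z^*$ on $\R^+$''. That uniform convergence is uniform in $t$ for each \emph{fixed} $\xi$; it carries no uniformity in $\xi$, and the contraction constant in the fixed-point iteration depends on $\xi$ through $|z_1^*-z_0^*|$. Pointwise-in-$\xi$ convergence of a sequence, each member of which diverges as $|\xi|\to\infty$, does not force the limit to diverge (take $f_j(\xi)=|\xi|/j$). The paper avoids this by arguing directly on $z^*$ by contradiction: if $\sup_n|z^*(t;(\tau,\xi_n))|<\infty$ along some $|\xi_n|\to\infty$, then since $|x(s,t,\xi_n)|\to\infty$ while $z^*$ stays bounded, the argument $x+z^*$ in \eqref{z^+} diverges, so by \textbf{(P4)} the integrand blows up, contradicting boundedness of the integral. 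You should replace the limit argument with this direct contradiction (or, equivalently, observe that boundedness of $z^*$ along $\xi_n$ places you exactly in the situation of the $z_0^*$ step with $x$ replaced by $x+z^*$).

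Your remark that $z_j^*(0;(0,\xi))=w^*(0;(0,\eta))=0$ for every $\xi,\eta$, so that the literal ``uniformly in $t\in\R^+$'' claim cannot hold at $t=0$, is correct and is a point the paper does not address; reading the statement as uniformity on $t$ bounded away from $0$ (or simply as divergence for each fixed $t>0$) is the appropriate fix.
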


\begin{proof}
For i),  by Corollary \ref{100} we have that $|x(s,t,\xi)|\to +\infty$ as $|\xi| \to +\infty$ for any $t,s \in \R^+$; moreover, by \textnormal{\textbf{(P4)}}, $f(t,x)$ is unbounded on $\R^+\times \R^n$, which implies that $|z^*_0(t;(t,\xi))| \to +\infty$ as $|\xi| \to + \infty$. The conclusion for $|z^*_j(t;(t,\xi))|$ with $j \geq 1$ it is obtained due to \textnormal{\textbf{(N)}} and \textnormal{\textbf{(P4)}}.

We now show that $z^*(t;(t,\xi))$ has the same behavior of $z^*_j(t;(t,\xi))$ for any $j \in \N_0$ as $|\xi| \to +\infty.$ Suppose that there exist a sequence $\{\xi_n\}_{n \in \mathbb{N}}$ with $|\xi_n| \to \infty$ such that $$\sup_{n}{|z^*(t;(\tau,\xi_n))|}=L<+\infty.$$

From (\ref{z^+}) we have that $|f(t,x_{\xi_n})|<+\infty$ as 
$|\xi_n|\to +\infty$, where $x_{\xi_n}$ is a function such that $|x_{\xi_n}| \to +\infty$ 
as $|\xi_n| \to +\infty$. However, this a contradiction with the fact that $f(t,\cdot )$ is unbounded on $\R^n$.

%by  that

%\begin{equation}\label{infinito}
  %  \lim_{t \to +\infty, |x| \to \infty}|f(t,x)|=+\infty.
%\end{equation}
 %If we fix $t_0 \in \R^+,$ we obtain the continuous function  $f_{t_0}:\R^n\to \R^n$ given by $f_{t_0}(x)=f(t_0,x).$ 
 %From (\ref{infinito}) and continuity of $f_{t_0}$, moreover, due to $f$ is unbounded in $x,$ we have that if $|x| \to +\infty$ thus $|(t_0,x)|_{\R^+\times \R^n}\to +\infty$ and

%\begin{equation*}
 %   \lim_{|(t_0,x)|\to+\infty}|f(t_0,x)|=+\infty
%\end{equation*}
%which implies 
%\begin{equation*}
%    \lim_{|x|\to+\infty}|f_{t_0}(x)|=+\infty.
%\end{equation*}

In order to prove ii), we see that by using \textnormal{\textbf{(P4)}} and Proposition \ref{1}, we have that $|y(t,s,\eta)| \to +\infty$ as $|\eta| \to +\infty$ for any $t,s \in \R^+$, concluding 
$|f(s,y(s,\tau,\eta))|\to +\infty$ as $|\eta|\to +\infty$ for any $s \in \R^+$ fixed.
\end{proof}

\begin{lemma}\label{bounded} Assume that  \textnormal{\textbf{(P1)-(P3)} are satisfied}, then for any $j \in \mathbb{N}_0$, $z_j^{*}(s; (t , \xi))$ is bounded in $s$ for any $\xi \in \mathbb{R}^n.$ Moreover, we have that

$$
\begin{array}{rcl}
|z_0^{*}(s; (t , \xi))| &\leq& \dfrac{K}{\alpha}\{\gamma|x(\cdot,t,\xi)|_{\infty}+\mu\}<+\infty \, \, \textnormal{and}\\\\
|z_j^{*}(s; (t , \xi))| &\leq&  \dfrac{K}{\alpha}\{\gamma |x(s,t,\xi)+z_{j-1}^*(s,t,\xi))|_\infty +\mu \}<+\infty, \, \, \textnormal{for} \, \, j \geq 1,
\end{array}
$$
where $|x(\cdot,t,\xi)|_{\infty}=\displaystyle \sup_{s \in \R^+}|x(s,t,\xi)|.$
\end{lemma}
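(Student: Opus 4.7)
My plan is to induct on $j$, in each step bounding the integral defining $z_j^*(s;(t,\xi))$ by combining the exponential decay of $\Phi$ from \textbf{(P1)} with the affine growth of $f$ that follows from \textbf{(P2)}--\textbf{(P3)}. The key inequality I would use throughout is
$$|f(u,y)| \leq |f(u,y)-f(u,0)| + |f(u,0)| \leq \gamma|y| + \mu, \qquad u\in\mathbb{R}^+,\ y\in\mathbb{R}^n,$$
obtained by applying the Lipschitz bound \textbf{(P2)} with reference point $0$ and invoking \textbf{(P3)} for $f(u,0)$.

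For the base case $j=0$ I would start from
$$z_0^*(s;(t,\xi)) = \int_0^s \Phi(s,u)\,f(u, x(u,t,\xi))\,du,$$
apply \textbf{(P1)} to get $\|\Phi(s,u)\| \leq K e^{-\alpha(s-u)}$, replace $|f(u,x(u,t,\xi))|$ by $\gamma|x(u,t,\xi)|+\mu \leq \gamma|x(\cdot,t,\xi)|_\infty + \mu$, and pull this constant outside the integral. The remaining integral equals $\tfrac{1}{\alpha}(1-e^{-\alpha s}) \leq \tfrac{1}{\alpha}$, yielding the announced bound. Finiteness of $|x(\cdot,t,\xi)|_\infty$ is immediate: on $[0,t]$ Corollary \ref{100} gives $|x(s,t,\xi)| \leq |\xi|e^{Mt}$, while on $[t,+\infty)$ the uniform contraction from \textbf{(P1)} yields $|x(s,t,\xi)| \leq K|\xi|$.

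The inductive step repeats the same calculation with $f(u,x(u,t,\xi)+z_{j-1}^*(u;(t,\xi)))$ in place of $f(u,x(u,t,\xi))$. The induction hypothesis makes $z_{j-1}^*(\cdot;(t,\xi))$ bounded in sup-norm, so $|x(\cdot,t,\xi)+z_{j-1}^*(\cdot;(t,\xi))|_\infty<+\infty$, and the affine bound on $f$ together with the exponential decay of $\Phi$ gives exactly
$$|z_j^*(s;(t,\xi))| \leq \frac{K}{\alpha}\bigl(\gamma|x(\cdot,t,\xi)+z_{j-1}^*(\cdot;(t,\xi))|_\infty + \mu\bigr).$$

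I do not expect any serious obstacle. The whole lemma is a standard convolution estimate: an exponentially decaying transition matrix integrated against an affinely bounded nonlinearity. The only small point of care is to package the induction cleanly and to certify finiteness of $|x(\cdot,t,\xi)|_\infty$, which requires splitting at $s=t$ as described above.
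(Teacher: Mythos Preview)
Your proposal is correct and follows essentially the same approach as the paper: induction on $j$, the affine bound $|f(u,y)|\leq\gamma|y|+\mu$ from \textbf{(P2)}--\textbf{(P3)}, and the exponential decay from \textbf{(P1)} to evaluate the convolution integral. Your justification of $|x(\cdot,t,\xi)|_\infty<+\infty$ by splitting at $s=t$ is in fact more explicit than the paper's, which simply asserts boundedness because $x$ solves (\ref{lin}).
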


\begin{proof}
We know that $z_j^*$ is defined by (\ref{induccion}). So, we prove by induction over $j$ that $z_j^*$ is bounded for any $\xi \in \mathbb{R}^n.$

For $j=0$, since we have \textbf{(P1)-(P3)}, it follows that

\begin{align*}
    |z_0^*(s;(t,\xi))|&\leq \dint_0^s Ke^{-\alpha(s-r)}|f(r,x(r,t,\xi))|dr\\
    &\leq \dint_0^sKe^{-\alpha(s-r)}\{ \gamma|x(r,t,\xi)|+\mu\}dr\\
    &\leq \dint_0^s K\gamma e^{-\alpha(s-r)}|x(r,t,\xi)|dr+\dint_0^sKe^{-\alpha(s-r)}\mu dr\\
    &\leq \dint_0^t  K\gamma e^{-\alpha(s-r)}|x(r,t,\xi)|dr+ \dfrac{K\mu}{\alpha}.\\
\end{align*}
Since $x(s,t,\xi)$ is solution of (\ref{lin}), thus is bounded in $s \in \R^+$, therefore $|x(\cdot,t,\xi)|_{\infty}<+\infty$, which implies that

\begin{align*}
    |z_0^*(s;(t,\xi))|&\leq  K\gamma |x(\cdot,t,\xi)|_{\infty}\dint_0^se^{-\alpha(s-r)}dr+\dfrac{K\mu}{\alpha}\\
    &\leq \dfrac{K}{\alpha}\{\gamma|x(\cdot,t,\xi)|_{\infty}+\mu\}<+\infty.
\end{align*}

Now, we will assume  the inductive hypothesis, that is, for some $n \in \mathbb{N}$ we have that  $|z^*_n(s;(t,\xi))|$ is bounded in $s \in \mathbb{R}^n.$ For the step $n+1$, since we have \textbf{(P1)-(P3)}, it follows that

\begin{align*}
    |z_{n+1}^*(s;(t,\xi))|&\leq \dint_0^s Ke^{-\alpha(s-r)}|f(r,x(r,t,\xi)+z_n^*(r;(t,\xi))|dr\\
    &\leq \dint_0^s Ke^{-\alpha(s-r)}\{\gamma|x(r,t,\xi)+z_n^*(r;(t,\xi))|+\mu \}dr.
\end{align*}

By induction hypothesis $|z_n^*(\cdot;(t,\xi))|_\infty<+\infty$, thus $|x(\cdot,t,\xi)+z_n^*(\cdot;(t,\xi))|_{\infty}<+\infty$. 

Finally, we obtain the following

\begin{align*}
    |z_{n+1}^*(s;(t,\xi))|&\leq \dfrac{K}{\alpha}\{\gamma |x(\cdot,t,\xi)+z_n^*(\cdot,t,\xi))|_\infty +\mu \}<+\infty.
\end{align*}

Therefore, the result follows.
\end{proof}

\begin{lemma}\label{zj uc}
Assume that \textbf{(P1)-(P2)} are satisfied, then for any $j \in \mathbb{N}_0$, $z_j^{*}(t; (t , \xi))$ is uniformly continuous with respect to $\xi.$
\end{lemma}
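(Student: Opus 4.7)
The plan is to prove by induction on $j \in \mathbb{N}_0$ the slightly stronger statement that $\xi \mapsto z_j^{*}(s;(t,\xi))$ is globally Lipschitz on $\mathbb{R}^n$ for every $s,t \in \mathbb{R}^+$, with a Lipschitz constant $L_j(s,t)$ that is locally bounded in $s$. Since a globally Lipschitz map on $\mathbb{R}^n$ is automatically uniformly continuous, specializing to $s=t$ will yield the lemma.

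For the base case $j=0$, subtracting two instances of (\ref{induccion}) gives
\begin{equation*}
z_0^{*}(s;(t,\xi)) - z_0^{*}(s;(t,\bar\xi)) = \int_0^s \Phi(s,r)\bigl[f(r, x(r,t,\xi)) - f(r, x(r,t,\bar\xi))\bigr]\, dr.
\end{equation*}
I would then apply \textbf{(P1)} to obtain $\|\Phi(s,r)\| \leq Ke^{-\alpha(s-r)}$, \textbf{(P2)} to bring out the Lipschitz constant $\gamma$, and Corollary \ref{100} (with the roles of time and initial time exchanged) to bound $|x(r,t,\xi) - x(r,t,\bar\xi)| \leq |\xi-\bar\xi|\,e^{M|r-t|}$. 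This produces
\begin{equation*}
|z_0^{*}(s;(t,\xi)) - z_0^{*}(s;(t,\bar\xi))| \leq K\gamma|\xi-\bar\xi| \int_0^s e^{-\alpha(s-r)} e^{M|r-t|}\, dr =: L_0(s,t)\,|\xi-\bar\xi|,
\end{equation*}
and $L_0(s,t)$ is continuous in $(s,t)$, hence locally bounded.

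For the inductive step, suppose $|z_n^{*}(r;(t,\xi)) - z_n^{*}(r;(t,\bar\xi))| \leq L_n(r,t)\,|\xi-\bar\xi|$ with $r \mapsto L_n(r,t)$ locally bounded. Applying \textbf{(P2)} to the integrand defining $z_{n+1}^{*}$ and using Corollary \ref{100} together with the induction hypothesis yields
\begin{equation*}
L_{n+1}(s,t) \leq K\gamma \int_0^s e^{-\alpha(s-r)} \bigl(e^{M|r-t|} + L_n(r,t)\bigr)\, dr,
\end{equation*}
which is again locally bounded in $s$, closing the induction.

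The only real difficulty is the notational bookkeeping of the Lipschitz constants through the induction; there is no analytic obstacle, because the kernel $Ke^{-\alpha(s-r)}$ decays and the interval $[0,s]$ is compact for each $s$. I want to emphasize that this argument does \emph{not} yield a modulus of continuity independent of $t$ — when $M > \alpha$ the constants $L_j(s,t)$ may grow with $t$ — but the statement of Lemma \ref{zj uc} only asserts uniform continuity in $\xi$, for which the pointwise-in-$t$ global Lipschitz bounds established above are sufficient.
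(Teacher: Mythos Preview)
Your argument correctly establishes that for each fixed $t\geq 0$ the map $\xi \mapsto z_j^*(t;(t,\xi))$ is globally Lipschitz, hence uniformly continuous. The difficulty is that this is weaker than what the lemma is meant to assert and than what its sole application requires. Although the bare wording could be read your way, the paper's own proof makes the intended meaning explicit: the inductive hypothesis there is
\[
\forall \varepsilon>0\ \exists\,\delta_j(\varepsilon)>0:\quad |\xi-\bar\xi|<\delta_j(\varepsilon)\ \Longrightarrow\ |z_j^*(t;(t,\xi))-z_j^*(t;(t,\bar\xi))|<\varepsilon\quad\text{for every }t\geq 0,
\]
with one $\delta_j$ valid for all $t$. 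This $t$-uniformity is precisely what Theorem~\ref{teorema1} invokes in order to verify item~(ii) of Definition~\ref{TopEq}, namely that $H(t,\xi)\to H(t,\xi_0)$ \emph{uniformly with respect to $t$}. You yourself note that your constants $L_j(t,t)$ blow up as $t\to\infty$ when $M>\alpha$, so your argument cannot deliver this conclusion and the gap is genuine, not merely notational.

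The paper obtains the $t$-uniformity through a device your direct Lipschitz estimate does not contain. Given $\varepsilon$, one fixes a length $L^*=L^*(\varepsilon)$ and, whenever $t>L^*$, splits $\int_0^t=\int_0^{t-L^*}+\int_{t-L^*}^t$. On the far piece $[0,t-L^*]$ the proof does \emph{not} compare $x(\cdot,t,\xi)$ with $x(\cdot,t,\bar\xi)$ --- that backward-in-time comparison is exactly the term that blows up --- but instead bounds each summand separately via Lemma~\ref{bounded} and lets the factor $e^{-\alpha(t-s)}\leq e^{-\alpha L^*}$ absorb the whole contribution into $\varepsilon/2$. The Lipschitz comparison through Corollary~\ref{100} is used only on the near piece $[t-L^*,t]$, whose length $L^*$ does not depend on $t$, and that is what keeps the resulting $\delta_j$ independent of $t$. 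Your approach runs the comparison over the whole interval $[0,t]$, which is why the constant diverges.
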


\begin{proof}
The proof will be done by induction.  Before that, we introduce the following auxiliary function $\theta_0(t):[0,+\infty) \to [0,+\infty)$ defined by:

\begin{equation*}
\theta_{0}(t)=\left\{\begin{array}{lcl}
K\gamma t & \textnormal{if} & \alpha=M,\\\\
K\gamma\left(\frac{e^{(M-\alpha)t}-1}{M-\alpha}\right)  &\textnormal{if} & \alpha<M.
\end{array}\right.
\end{equation*}

Now, given  $\varepsilon>0$, let us define the constants

\begin{equation*}
    L^*(\varepsilon)=\dfrac{1}{\alpha}\ln\left(\dfrac{2\gamma \omega K}{\alpha \varepsilon}\right)\hspace{0.3cm} \text{and} \hspace{0.3cm}  \Theta_0^*=\displaystyle \max_{t \in [0,L^*(\varepsilon)]}\theta_0(t)
\end{equation*}

where $\omega=\omega_\xi+ \omega_{\overline{\xi}}$ with $\omega_\xi=\displaystyle \sup_{s \in \R^+} |x(s,t,\xi)+z_j^*(s;(t,\xi))|$.

%\textcolor{red}{Fernanda}

We will distinguish the cases $t\in [0,L^*(\varepsilon)]$ and $t>L^*(\varepsilon)$ and we will use the following notation

\begin{equation*}
    \Delta_j(t,\xi,\overline{\xi})=z^*_j(t;(t,\xi))-z^*_j(t;(t,\overline{\xi})).
\end{equation*}

First, for $j=0$ and $t\in [0,L^*(\varepsilon)],$ by using (\ref{acotamiento-A}) combined with \textbf{(P1)-(P2)} and the right inequality in (\ref{cor2.4}), we can verify that
%\textcolor{blue}{\begin{equation}\label{500}
    %|x(s,t,\xi)-x(s,t,\overline{\xi})|\leq |\xi-\overline{\xi}|e^{M|t-s|},
%\end{equation}}

\begin{align*}      
|\Delta_0(t,\xi,\overline{\xi})|&\leq K\gamma e^{-\alpha t}\dint_0^t  e^{\alpha s}|x(s,t,\xi)-x(s,t,\overline{\xi})|ds\\
    &\leq K\gamma e^{-\alpha t}\dint_0^t e^{\alpha s}|\xi-\overline{\xi}|e^{M(t-s)}ds\\
    &\leq K\gamma \left \{\dfrac{e^{(M-\alpha)t}-1}{M-\alpha}\right \}|\xi-\overline{\xi}|\\
    &\leq \Theta_0^*|\xi-\overline{\xi}|.
\end{align*}

On the other hand, when $t>L^*(\varepsilon)$, \textbf{(P1)-(P2)} we have

\begin{align*}
    |\Delta_0(t,\xi,\overline{\xi})|&\leq K \gamma e^{-\alpha t}\dint_0^{t-L^*}e^{\alpha s}\{|x(s,t,\xi)|+|x(s,t,\overline{\xi})|\}ds + K \gamma \dint_{t-L^*}^t e^{-\alpha(t- s)} \{|x(s,t,\xi)-x(s,t,\overline{\xi})|\}ds\\
    &\leq  K \gamma e^{-\alpha t}\dint_0^{t-L^*}e^{\alpha s} \omega ds + K \gamma e^{-\alpha t}\dint_{t-L^*}^t e^{\alpha s} \{|x(s,t,\xi)-x(s,t,\overline{\xi})|\}ds.
\end{align*}

By (\ref{acotamiento-A}) combined with $u=t-s$ and the right inequality of (\ref{cor2.4}), it follows that

\begin{align*}
    |\Delta_0(t,\xi,\overline{\xi})|&\leq \dfrac{K \gamma \omega}{\alpha}e^{-\alpha L^*}+ K\gamma \dint_0^{L^*} e^{-\alpha u}\{|x(t-u,t,\xi)-x(t-u,t,\overline{\xi})|\}du\\
    &\leq \dfrac{K \gamma \omega}{\alpha}e^{-\alpha L^*}+ K\gamma \dint_0^{L^*} |\xi-\overline{\xi}|e^{(M-\alpha) u}du\\
    &\leq \dfrac{\varepsilon}{2} +K\gamma |\xi-\overline{\xi}|\left\{ \dfrac{e^{(M-\alpha)L^*}-1}{M-\alpha}\right\}\\
    &\leq \dfrac{\varepsilon}{2}+ \Theta_0^*|\xi-\overline{\xi}|.
\end{align*}

Secondly, we will assume the inductive hypothesis
\begin{equation*}
    \forall \varepsilon>0, \ \exists \ \delta_j(\varepsilon)>0 \text{ s.t. } |\xi-\overline{\xi}|<\delta_j(\varepsilon) \implies |z_j^*(t;(t,\xi))-z_j^*(t;(t,\overline{\xi}))|<\varepsilon \hspace{0.3cm} \text{ for any}\  t \geq 0.
\end{equation*}

For the step $j+1$, as before, we will distinguish the cases $t \in [0,L^*(\varepsilon)]$ and $t >L^*(\varepsilon)$.
Then, when $t \in [0,L^*(\varepsilon)]$ and for a given $\varepsilon>0$, using the right inequality of (\ref{cor2.4}) and \textbf{(P1)-(P2)}, we can verify

\begin{align*}
    |\Delta_{j+1}(t,\xi,\overline{\xi})|&\leq K\gamma e^{-\alpha t}\dint_0^te^{\alpha s}\{|x(s,t,\xi)-x(s,t,\overline{\xi})|+|\Delta_j(s,\xi,\overline{\xi})|\}ds\\
    &\leq K\gamma e^{-\alpha t}\dint_0^te^{\alpha s}\{|\xi-\overline{\xi}|e^{M(t-s)}+|\Delta_j(\cdot,\xi,\overline{\xi})|_{\infty}\}ds\\
    &\leq K\gamma\left\{ \dfrac{e^{(M-\alpha)t}-1}{M-\alpha}\right \}|\xi-\overline{\xi}|+ \dfrac{K\gamma}{\alpha}|\Delta_j(\cdot,\xi,\overline{\xi})|_{\infty}\\
    &\leq \Theta^*_0 |\xi-\overline{\xi}|+\dfrac{K\gamma}{\alpha}|\Delta_j(\cdot,\xi,\overline{\xi})|_{\infty}
\end{align*}
where $|\Delta_j(\cdot,\xi,\overline{\xi})|_{\infty}=\displaystyle \sup_{t\geq 0}|\Delta_j(t,\xi,\overline{\xi})|$.

 When  $t> L^*(\varepsilon)$ we use (\ref{acotamiento-A}), and the Lipschitzness  of  $f$ in order to deduce that

\begin{align*}
    |\Delta_{j+1}(t,\xi,\overline{\xi})| &\leq K\gamma \dint_0^{t-L^*}e^{-\alpha(t-s)}\{|x(s,t,\xi)+z^*_j(s;(t,\xi))|+|x(s,t,\overline{\xi})+z^*_j(s;(t,\overline{\xi}))|\}ds\\
    &\hspace{0.7cm}+K\gamma\dint^t_{t-L^*} e^{-\alpha(t-s)}\{|x(s,t,\xi)-x(s,t,\overline{\xi})|+|\Delta_j(s,\xi,\overline{\xi})|\}ds.
\end{align*}

By Lemma \ref{bounded} combined with $u=t-s$ and the right inequality in (\ref{cor2.4}), we have that
\begin{align*}
    |\Delta_{j+1}(t,\xi,\overline{\xi})|&\leq K\gamma \dint_0^{t-L^*}e^{-\alpha(t-s)}\omega ds+K\gamma \dint_0^{L^*}e^{-\alpha u}|x(t-u,t,\xi)-x(t-u,t,\overline{\xi})|du\\
    &\hspace{1.0cm}+ K\gamma \dint_0^{L^*}e^{-\alpha u}|\Delta_j(\cdot,\xi,\overline{\xi})|_{\infty}du\\
    &\leq \dfrac{K\gamma \omega}{\alpha}(e^{-\alpha L^*}-e^{-\alpha t})+K\gamma|\xi-\overline{\xi}|\dint_0^{L^*}e^{(M-\alpha)u}du+\dfrac{K \gamma}{\alpha}(1-e^{-\alpha L^*})|\Delta_j(\cdot,\xi,\overline{\xi})|_{\infty}\\
    &\leq \dfrac{K\gamma \omega}{\alpha}e^{-\alpha L^*}+K\gamma|\xi-\overline{\xi}|\left\{\dfrac{e^{(M-\alpha)L^*}-1}{M-\alpha} \right \} + \dfrac{K \gamma}{\alpha}|\Delta_j(\cdot,\xi,\overline{\xi})|_{\infty}\\
    &\leq \dfrac{\varepsilon}{2}+\Theta^*_0|\xi-\overline{\xi}|+\dfrac{K\gamma}{\alpha}|\Delta_j(\cdot,\xi,\overline{\xi})|_{\infty}.
\end{align*}

%where $\omega=\omega_\xi+ \omega_{\overline{\xi}}$ con $\omega_\xi=\displaystyle \sup_{s \in \R^+} \|x(s,t,\xi)+z_j^*(s;(t,\xi))\|$, 
%$\omega_{\overline{\xi}}=\displaystyle \sup_{s \in \R^+} \|x(s,t,\x)+z_j^*(s;(t,\x))\|$.
Summarizing, for any $t\geq 0$ it follows that

\begin{displaymath}
|\Delta_{j+1}(t,\xi,\bar{\xi})|\leq 
\left\{\begin{array}{lcl}
\displaystyle 
 \Theta_{0}^{*}|\xi-\bar{\xi}|+\frac{K\gamma}{\alpha} |\Delta_{j}(\cdot,\xi,\bar{\xi})|_{\infty}& \textnormal{if}& t\in [0,L^*]\\\\ \dfrac{\varepsilon}{2}+\Theta^*_0|\xi-\overline{\xi}|+\dfrac{K\gamma}{\alpha}|\Delta_j(\cdot,\xi,\overline{\xi})|_{\infty} &\textnormal{if}& t>L^*.
\end{array}\right.
\end{displaymath}
Thus, taking 

$$\delta_{j+1}(\varepsilon)=\min \left\{\delta_j(\varepsilon/2), \dfrac{\varepsilon}{2\Theta^*_0} \left(1-\dfrac{K\gamma}{\alpha}\right) \right \}$$
for any $t\geq 0$, we have that

\begin{equation*}
    \forall  \varepsilon>0, \exists \ \delta_{j+1}(\varepsilon)>0 \  \text{s.t.} \ |\xi-\x|<\delta_{j+1} \implies |z^*_{j+1}(t;(t,\xi))-z^*_{j+1}(t;(t,\x))|<\varepsilon,
\end{equation*}
and the uniform continuity of $\xi \mapsto z_{j}^{*}(t;(t,\xi)) $ follows for any $j\in \mathbb{N}$.

\end{proof}

As we have set forth the premises, we are now able to state our main result of this section, which establishes that the systems (\ref{lin}) and (\ref{nolin}) are topologically equivalent on $\mathbb{R}^+.$

\begin{theorem}
\label{teorema1}
Assume that {\bf{(P1)--\bf{(P4)}}},  {\bf{(N)}} are satisfied and $K \gamma < \alpha,$
then systems \textnormal{(\ref{lin})} and \textnormal{(\ref{nolin})} are $\mathbb{R}^+-$ topologically equivalent.
\end{theorem}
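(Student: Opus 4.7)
My plan is to verify each of the four conditions of Definition \ref{TopEq} in turn, for both $H$ in (\ref{Homeo-H}) and $G$ in (\ref{Homeo-G}), and to show that these maps are mutual inverses at every fixed $\tau \in \mathbb{R}^+$.

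For condition (i), differentiating $H[t, x(t,\tau,\xi)] = x(t,\tau,\xi) + z^*(t;(\tau,\xi))$ in $t$ and using the ODEs (\ref{lin}) for $x$ and (\ref{pivote1}) for $z^*$ shows that the composition satisfies (\ref{nolin}) with value $H(\tau,\xi)$ at $t=\tau$; uniqueness then yields $H[t, x(t,\tau,\xi)] = y(t,\tau, H(\tau,\xi))$, and the companion identity $G[t, y(t,\tau,\eta)] = \Phi(t,\tau) G(\tau,\eta)$ follows by the same argument using (\ref{pivote2}).

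For condition (ii), I would repeat the splitting-at-$t - L^*(\varepsilon)$ estimate from the proof of Lemma \ref{zj uc} directly on the fixed-point equation (\ref{z^+}) applied to $z^*$ rather than the iterates. Using \textbf{(P2)} and Corollary \ref{100}, this produces an inequality of the form
\begin{equation*}
|z^*(\cdot;(t,\xi)) - z^*(\cdot;(t,\overline{\xi}))|_\infty \leq \tfrac{1}{1 - K\gamma/\alpha}\left(\tfrac{\varepsilon}{2} + \Theta_0^* |\xi - \overline{\xi}|\right),
\end{equation*}
which, since $K\gamma/\alpha < 1$, yields uniform continuity of $\xi \mapsto z^*(t;(t,\xi))$ uniformly in $t$, and hence the same for $H(t,\xi) = \xi + z^*(t;(t,\xi))$. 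For condition (iii), assumption \textbf{(N)} applied at $s = t$ (where $x(t,t,\xi) = \xi$), together with the uniform convergence $z_j^* \to z^*$ obtained from the Banach fixed-point argument for $\Gamma_{(\tau,\xi)}$, gives $|H(t,\xi)| = |\xi + z^*(t;(t,\xi))| \to \infty$ as $|\xi|\to\infty$ uniformly in $t$. The analogous properties for $G$ follow from Lemma \ref{lema infinito}\,(ii) and a parallel splitting estimate for $w^*$ based on Proposition \ref{1}.

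For condition (iv), I would first establish $G \circ H = \mathrm{id}$ by direct substitution: using the definition of $G$, the identity from (i), and (\ref{z^+}),
\begin{equation*}
G[\tau, H(\tau,\xi)] = H(\tau,\xi) - \int_0^\tau \Phi(\tau,s) f(s, y(s,\tau, H(\tau,\xi)))\, ds = H(\tau,\xi) - z^*(\tau;(\tau,\xi)) = \xi.
\end{equation*}
The reverse direction rests on a uniqueness argument: the function $t \mapsto y(t,\tau,\eta) - x(t,\tau, G(\tau,\eta))$ is bounded on $\R^+$ (because $y$ is bounded by Remark \ref{para los betas} and $x(t,\tau, G(\tau,\eta)) = \Phi(t,\tau) G(\tau,\eta)$ is bounded by \textbf{(P1)} on $t \geq \tau$ and by continuity on $[0,\tau]$) and satisfies $z' = A(t) z + f(t, x(t,\tau, G(\tau,\eta)) + z)$, so Proposition \ref{10} identifies it with $z^*(t;(\tau, G(\tau,\eta)))$; evaluating at $t=\tau$ gives $\eta - G(\tau,\eta) = z^*(\tau;(\tau, G(\tau,\eta)))$, i.e., $H[\tau, G(\tau,\eta)] = \eta$. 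Combined with the continuity and properness already established, this gives the homeomorphism (iv). I expect the main obstacle to be condition (ii): passing the uniform-in-$t$ modulus of continuity from the iterates $z_j^*$ of Lemma \ref{zj uc} to the limit $z^*$ is not automatic, because the recursion $\delta_{j+1}(\varepsilon) \leq \delta_j(\varepsilon/2)$ loses a factor of two in $\varepsilon$ at each step and would otherwise degenerate; performing the splitting estimate directly on $z^*$ bypasses this by exploiting the contraction $K\gamma/\alpha < 1$ to close the supremum on the right-hand side.
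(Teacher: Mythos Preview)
Your plan tracks the paper's structure, and your treatments of (i) and (iv) are correct---indeed more explicit than the paper, which for those items simply cites \cite{CMR}. The real point of departure is (ii) for $H$: the paper establishes uniform continuity of each iterate $z_j^*$ (Lemma~\ref{zj uc}) and then passes to the limit via $z_j^* \to z^*$, whereas you bypass the iterates and run the splitting estimate directly on the fixed point. Your worry about the recursion $\delta_{j+1}(\varepsilon) \le \delta_j(\varepsilon/2)$ is well-placed: the paper's final step needs $|z^* - z_j^*|_\infty < \varepsilon$ \emph{uniformly in $\xi$}, which the Banach iteration does not supply for free, so a direct route is attractive in principle.

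There are, however, two gaps in your sketch. First, in (ii) your displayed inequality cannot hold with a $t$-independent constant $\Theta_0^*$. The integral equation \eqref{z^+} contains $z^*(s;(t,\xi))$ with the \emph{parameter} $(t,\xi)$ fixed, so when you take the supremum over the first argument $r$ you must control $|x(s,t,\xi)-x(s,t,\overline\xi)| \le e^{M|t-s|}|\xi-\overline\xi|$ for all $s\in[0,r]$; for $r$ small and $t$ large this factor is of order $e^{Mt}$, so the closed inequality is not uniform in $t$. If instead you restrict to $r=t$ you cannot close the self-reference at all, because the integrand involves $z^*(s;(t,\xi))$ for $s<t$, not $z^*(s;(s,\xi))$. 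The paper's iterate scheme sidesteps this precisely because the inner term at step $j+1$ lives at level $j$ and is handled by induction. Second, in (iii) you need $|z_j^*(t;(t,\xi)) - z^*(t;(t,\xi))|$ to stay bounded as $|\xi|\to\infty$ in order to transfer \textbf{(N)} from the iterates to $z^*$, but the contraction bound $(K\gamma/\alpha)^j|z_0^*-z^*|_\infty$ grows with $|\xi|$. The paper avoids this by invoking Lemma~\ref{lema infinito}(i) directly for $z^*$; since you already cite part (ii) of that lemma for $G$, the natural fix for $H$ is simply to use part (i).
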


\begin{proof} 
Firstly, recall that the maps $H$ and $G$ are defined in (\ref{Homeo-H}) and (\ref{Homeo-G}), respectively.

Secondly, the item i) of Definition \ref{TopEq} follows by \cite[Proof Theorem 1]{CMR}.
In similar way, the proof of iv) follows considering \cite[Step 4 Proof Theorem 1]{CMR} which establish the bijectiveness of $H$ and $G.$

In order to prove iii) of Definition \ref{TopEq}, we show that  
 $|H(t,\xi)|\to +\infty$ as $|\xi|\to +\infty$. By item i) of Lemma \ref{lema infinito} we have that this fact is verified for $z^*(t;(t,\xi)).$ 
Therefore, we can conclude that $|H(t,\xi)|\to +\infty$ as $|\xi|\to +\infty$. Similarly, it's  showed that $|G(t,\eta)|\to +\infty$ as $|\eta | \to +\infty$  due to item ii) Lemma \ref{lema infinito}.

Finally we show ii), that is,  $H$ and $G$ are uniformly continuous with respect to $\xi$ for any $t \geq 0$.  We construct the auxiliary function $\theta:[0,+\infty) \to [0,+\infty)$ defined by

\begin{displaymath}
\theta(t)=1+ K\gamma \left(\frac{e^{(M+\gamma-\alpha)t}-1}{M+\gamma-\alpha}\right) 
\end{displaymath}

Now, given $\varepsilon>0$, let us define the following constants

\begin{equation*}
    L(\varepsilon)=\dfrac{1}{\alpha}\ln\left(\dfrac{2\gamma \beta K}{\alpha \varepsilon}\right) \hspace{0.3cm}  \text{and} \hspace{0.3cm} \theta^*=\displaystyle \max_{t \in [0,L(\varepsilon)]}\theta(t),
\end{equation*}
where  $\beta=\beta_1 + \beta_2$ with $\beta_1=\displaystyle \sup_{t \in \R^+}|y(t,\tau,\eta)|, \, \beta_2=\displaystyle \sup_{t \in \R^+}|y(t,\tau,\n)|;$ which are well defined by Remark \ref{para los betas}.

We will prove the uniform continuity of $G$ by considering two cases: 

\noindent\emph{Case i)}  $t \in [0,L(\varepsilon)]$. By {\bf{(P1)}} and {\bf{(P2)}} we can deduce that
    \begin{equation}\label{300}
        |G(t,\eta)-G(t,\n)|\leq |\eta-\n|+K\gamma e^{-\alpha t}\dint_0^te^{\alpha s}|y(s,t,\eta)-y(s,t,\n)|ds
    \end{equation}

By {\bf{(P2)}} and (\ref{acotamiento-A}) we obtain for any $0\leq s \leq t$:

$$
\begin{array}{rcl}
    |y(s,t,\eta)-y(s,t,\n)|&\leq &  |\eta-\n|+\dint_s^t\|A(\tau)\| |y(\tau,t,\eta)-y(\tau,t,\n)|d\tau \\ 
    & & +\dint_s^t|f(\tau,y(\tau,t,\eta))-f(\tau,y(\tau,t,\n))|d\tau  \\
    &\leq& |\eta-\n|+(M+\gamma)\dint_s^t|y(\tau,t,\eta)-y(\tau,t,\n)|d\tau.
\end{array}
$$

By using Gronwall's Lemma, we conclude that for any $0\leq s\leq t:$

\begin{equation}
\label{400}
    |y(s,t,\eta)-y(s,t,\n)| \leq |\eta-\n|e^{(M+\gamma)(t-s)}.
\end{equation}

Upon inserting (\ref{400}) in (\ref{300}), we obtain that

\begin{align*}
    |G(t,\eta)-G(t,\n)|\leq & \left(1+K\gamma e^{(M+\gamma-\alpha)t}\dint_0^te^{-(M+\gamma-\alpha)s}\right)|\eta-\n| \\
    =&\left(1+K\gamma \left \{\dfrac{e^{(M+\gamma-\alpha)t}-1}{M+\gamma-\alpha} \right \} \right)|\eta-\n| \\
    \leq& \  \theta(t)|\eta-\n| \\
    \leq& \  \theta^*|\eta-\n|.
\end{align*}

\noindent\emph{Case ii)}  $t>L(\varepsilon).$ By \textbf{(P1)-(P2)} we have,

\begin{align*}
    |G(t,\eta)-G(t,\n)|\leq& \  |\eta-\n|+\dint_0^{t-L}Ke^{-\alpha(t-s)}|f(s,y(s,t,\eta))-f(s,y(s,t,\n))|ds\\
    &\hspace{0.2cm}+ \dint_{t-L}^{t}Ke^{-\alpha(t-s)}|f(s,y(s,t,\eta))-f(s,y(s,t,\n))|ds\\
    \leq& \ |\eta-\n| + \dint_0^{t-L}K\gamma e^{-\alpha(t-s)}\{|y(s,t,\eta)|+|y(s,t,\n)|\}ds\\
    &\hspace{0.2cm}+ \dint_{t-L}^{t}K\gamma e^{-\alpha(t-s)}|y(s,t,\eta)-y(s,t,\n)|ds\\
    \leq& \ |\eta-\n| + K\gamma \beta \dint_0^{t-L}e^{-\alpha(t-s)}ds+\dint_{t-L}^{t}K\gamma e^{-\alpha(t-s)}|y(s,t,\eta)-y(s,t,\n)|ds\\
    =&|\eta-\n| + K\gamma \beta \dint_0^{t-L}e^{-\alpha(t-s)}ds+\dint_{0}^{L}K\gamma e^{-\alpha u}|y(t-u,t,\eta)-y(t-u,t,\n)|du.
\end{align*}

As in case i), the inequality (\ref{400}) implies
\begin{align*}
    K\gamma \dint_0^Le^{-\alpha u}|y(t-u,t,\eta)-y(t-u,t,\n)|du&\leq K\gamma\dint_0^Le^{(M+\gamma-\alpha)u}|\eta-\n|du\\
    &=K\gamma \left\{\dfrac{e^{(M+\gamma-\alpha)L}-1}{M+\gamma-\alpha} \right\}|\eta-\n|.
\end{align*}

Therefore,

\begin{align*}
    |G(t,\eta)-G(t,\n)|&\leq \left(1+K\gamma\left\{\dfrac{e^{(M+\gamma-\alpha)L}-1}{M+\gamma-\alpha}  \right\}\right)|\eta-\n|+\dfrac{K\gamma\beta}{\alpha}e^{-\alpha L}\\
    &\leq \theta^*|\eta-\n|+\dfrac{\varepsilon}{2}.
\end{align*}

Summarizing, given  $\varepsilon>0$, there exists $L(\varepsilon)>0$ and $\theta^*$ such that:

\begin{displaymath}
|G(t,\eta)-G(t,\bar{\eta})|\leq \left\{\begin{array}{lcl}
\displaystyle \theta^{*}|\eta-\bar{\eta}| &\textnormal{if}& t\in [0,L]\\\\
\displaystyle \theta^{*}|\eta-\bar{\eta}|+\frac{\varepsilon}{2} &\textnormal{if}& t>L.
\end{array}\right.
\end{displaymath}

Then it follows that
$$
\forall \varepsilon>0 \,\exists
\delta(\varepsilon)=\frac{\varepsilon}{2\theta^{*}}\quad \textnormal{such that} \quad
|\eta-\bar{\eta}|<\delta \Rightarrow |G(t,\eta)-G(t,\bar{\eta})|<\varepsilon
$$
and the uniform continuity of $G$ follows.

Now we will prove that $H$ is uniformly continuous for any $t\geq 0$, and as the identity is uniformly continuous, we only need to prove that $\xi \mapsto z^*(t;(t,\xi))$ is uniformly continuous.

We noticed before that the fixed point $z^*(t;(t,\xi))$ can be written as the uniform limit on $\R^+$ of the sequence $z_j^*(t;(t,\xi))$ defined in (\ref{induccion}), so the uniform continuity of each map $\xi \mapsto z_j^*(t;(t,\xi))$  follows from Lemma \ref{zj uc}.

In order to finish our proof, we choose $N\in \mathbb{N}$ such that for any $j>N$ fixed, it follows that
\begin{displaymath}
|z^{*}(\cdot;(\cdot,\xi))-z_{j}^{*}(\cdot;(\cdot,\xi))|_{\infty}<\varepsilon \quad \textnormal{for any} \quad  \xi\in \mathbb{R}^{n},
\end{displaymath} 
and therefore, if $|\xi-\bar{\xi}|<\delta_{j}$ with $j>N$, it is true that
\begin{displaymath}
\begin{array}{rcl}
|z^{*}(t;(t,\xi))-z^{*}(t;(t,\bar{\xi}))|&\leq & |z^{*}(t;(t,\xi))-z_{j}^{*}(t;(t,\xi))|+\Delta_{j}(t,\xi,\bar{\xi})\\\\
&&+|z^{*}(t;(t,\bar{\xi}))-z_{j}^{*}(t;(t,\bar{\xi}))|<3\varepsilon,
\end{array}
\end{displaymath}
and the uniform continuity of $\xi\mapsto z^{*}(t;(t,\xi))$ and $\xi\mapsto H(t,\xi)$ follows for any fixed $t\geq 0$.

\end{proof}

\begin{remark}
The proof of Theorem \ref{teorema1} follows the ideas of \cite[Theorem 2.1]{CMR}. This last result, on the one hand,
considers a definition of $\mathbb{R}^+-$topological equivalence stronger than Definition \ref{TopEq}, however, on the other hand, and compensatory, it is used that
$\abs{f(t,y)} \leq \mu$ for any $t \geq 0$, while in our case we only have that $\abs{f(t,0)} \leq \mu$ for any $t \geq 0.$ These facts establish a contrast between both perspectives, highlighting
that the proof of Theorem \ref{teorema1} have more subtle technical details  than \cite[Theorem 2.1]{CMR}. Indeed, the role of unboundness of $(t,x) \mapsto f(t, x)$ is hard to handle to show that $|H(t,\xi)|\to +\infty$ as $|\xi|\to +\infty$ and that $|G(t,\eta)|\to +\infty$ as $|\eta|\to +\infty.$
 \end{remark}
 
 \section{Continuity of Topological Equivalence}
 
 In this section, we deal with the fact that the maps $H$ and $G$ constructed in the previous section are both continuous on $\mathbb{R}^+ \times \mathbb{R}^n.$ In order to formalize this fact, we recall the definition of $\mathbb{R}^{+}$-- continuously topologically equivalent introduced in \cite{CMR}, however tailored to the context of this work.
 
 \begin{definition} The systems \textnormal{(\ref{lin})} and \textnormal{(\ref{nolin})} are $\mathbb{R}^+$--continuously topologically equivalent if 
there exists a function $H\colon \mathbb{R}^+ \times \mathbb{R}^{n}\to \mathbb{R}^{n}$ with the properties
 \begin{itemize}
\item[(i)] If $x(t)$ is a solution of \textnormal{(\ref{lin})}, then $H[t,x(t)]$ is a solution 
of \textnormal{(\ref{nolin})};
\item[(ii)]$H(t,x) \to H(t,x_0)$ as $x \to x_0$, uniformly with respect to $t$;
    \item[(iii)] $|H(t,x)| \to +\infty$ as $|x| \to +\infty$, uniformly with respect to $t$;
\item[(iv)]  for each fixed $t\in \mathbb{R}^+$, $u\mapsto H(t,u)$ is an homeomorphism of $\mathbb{R}^{n}$;
\item[(v)] $H$ is continuous in any $(t,u)\in \mathbb{R}^+\times \mathbb{R}^{n}$.
\end{itemize}
In addition, the function $G(t,u)=H^{-1}(t,u)$ has properties \textnormal{(ii)--(v)} and maps solutions of \textnormal{(\ref{nolin})} into solutions of \textnormal{(\ref{lin})}.
 \end{definition}
 
 The following result, the main of this section, states that the functions $H(t,x)$ and $G(t,x)$ given by Theorem \ref{teorema1}, satisfy that are continuous functions on $(t,x) \in \mathbb{R}^+\times \mathbb{R}^{n}.$

 \begin{theorem}
 Assume that {\bf{(P1)--\bf{(P4)}}}, {\bf{(N)}} are satisfied and $K\gamma  < \alpha$, then the systems (\ref{lin}) and (\ref{nolin}) are $\R^+$--continuously topologically equivalent.
 \end{theorem}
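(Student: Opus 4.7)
Theorem \ref{teorema1} already supplies (i)--(iv) of the definition for both $H$ and $G$ (the uniform continuity in the space variable being uniform in time), so the only new item to establish is (v): joint continuity on $\mathbb{R}^+\times\mathbb{R}^n$. For $H$, my plan is the triangle split
\[
|H(t,\xi)-H(t_0,\xi_0)| \;\leq\; |H(t,\xi)-H(t,\xi_0)| \;+\; |H(t,\xi_0)-H(t_0,\xi_0)|.
\]
The first summand is controlled by the uniform-in-$t$ continuity furnished by Theorem \ref{teorema1} (ii). Hence it suffices to show, for each fixed $\xi_0$, that the map $t\mapsto H(t,\xi_0)=\xi_0+z^{*}(t;(t,\xi_0))$ is continuous at $t_0$, which in turn reduces to proving continuity of $t\mapsto z^{*}(t;(t,\xi_0))$.

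The delicate point is that $t$ plays two roles in $z^{*}(t;(t,\xi))$: it is both the evaluation instant and the parameter fixing the reference trajectory $x(\cdot,t,\xi)$. To disentangle them, I introduce the three-variable function $F_j(s,t,\xi):=z_j^{*}(s;(t,\xi))$ and prove by induction on $j$ that each $F_j$ is jointly continuous in $(s,t,\xi)$. The base case follows from (\ref{induccion}) because $\Phi$, $f$, and $(r,t,\xi)\mapsto x(r,t,\xi)$ are continuous, while the integrand is locally dominated via \textbf{(P1)} combined with the pointwise estimate $|f(r,x(r,t,\xi))|\le \gamma|x(r,t,\xi)|+\mu$ from \textbf{(P2)}--\textbf{(P3)}; dominated convergence, plus a boundary term $\int_{t_0}^{t}$ that vanishes with $|t-t_0|$, gives joint continuity of $F_0$. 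The inductive step is the same dominated-convergence argument applied with the argument $x(r,t,\xi)+F_{j-1}(r,t,\xi)$, whose continuity is granted by the hypothesis on $F_{j-1}$. Evaluating along the diagonal, $z_j^{*}(t;(t,\xi))=F_j(t,t,\xi)$ is continuous in $(t,\xi)$. Since the fixed-point iteration producing $z^{*}$ has contraction rate $K\gamma/\alpha<1$ independent of $(t,\xi)$, and the seed $F_0$ is locally uniformly bounded by Lemma \ref{bounded}, the convergence $z_j^{*}\to z^{*}$ is locally uniform in $(t,\xi)$, transferring joint continuity to the limit, and hence to $H$.

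The proof for $G$ is more direct, since (\ref{Homeo-G}) is not recursive. Continuity of $(t,\eta)\mapsto \int_0^{t}\Phi(t,s)f(s,y(s,t,\eta))\,ds$ follows from continuity of $\Phi$, $f$, and of the nonlinear flow $(s,t,\eta)\mapsto y(s,t,\eta)$ (standard ODE theory), using the uniform bound $|f(s,y(s,t,\eta))|\le \gamma\sup_{s\in\mathbb{R}^+}|y(s,t,\eta)|+\mu<+\infty$ supplied by Proposition \ref{10} (see Remark \ref{para los betas}) together with \textbf{(P2)}--\textbf{(P3)}; an exponentially dominated-convergence argument with the moving upper limit $t$ concludes. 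The principal obstacle, as above, is the bookkeeping needed to make the induction on $F_j$ rigorous while keeping the two copies of $t$ in $z_j^{*}(t;(t,\xi))$ separated until the very last step; once the three-variable formulation is in place each inductive stage is a routine continuity-under-the-integral computation.
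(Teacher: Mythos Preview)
Your approach is correct and, for $G$, essentially coincides with the paper's direct integral estimate; for $H$ it takes a genuinely different route. The paper works directly with the full difference $H(t,\xi)-H(t_0,\xi_0)$ on a compact time interval $I$: it splits $\int_0^t-\int_0^{t_0}$ into three pieces (one carrying $\Phi(t,s)-\Phi(t_0,s)$, one carrying the difference of integrands via the Lipschitz bound on $f$, and the boundary piece $\int_{t_0}^{t}$), and controls each using continuity of $\Phi$ in its first argument, continuity of $x(s,t,\xi)$ in $(t,\xi)$, and continuity of $z^{*}(s;(t,\xi))$ in the parameter $(t,\xi)$---this last fact being invoked as a black box from standard continuous-dependence-on-parameters theory (Hartman, Ch.~V) applied to the IVP (\ref{pivote1}). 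You instead first use the uniform-in-$t$ space continuity already obtained in Theorem~\ref{teorema1} to reduce to pure time continuity, and then establish joint continuity of $z^{*}(t;(t,\xi))$ constructively, propagating continuity through the three-variable Picard iterates $F_j$ and passing to the limit via the locally uniform contraction estimate $\|z_j^{*}-z^{*}\|_\infty\le (K\gamma/\alpha)^{j}C(\tau,\xi)$ with $C$ locally bounded (this is where Lemma~\ref{bounded} enters). Your argument is longer but more self-contained, since it avoids citing an external parameter-dependence theorem and makes the locally uniform convergence of the iterates explicit; the paper's is shorter but relies on that citation for the key estimate (\ref{crt2}).
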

 
 \begin{proof}
 By Theorem \ref{teorema1}, we know that the systems (\ref{lin}) and (\ref{nolin}) are $\R^+$--topologically equivalent. Moreover, by using the continuity of the solutions with respect to the initial time and initial conditions  \cite[Ch.V]{Hartman}, we note that for any $\varepsilon_{1}>0$ there exists $\delta_{1}(t_{0},\xi_{0},\varepsilon_{1})>0$ such that
\begin{equation}
\label{crt1}
|x(s,t,\xi)-x(s,t_{0},\xi_{0})|< \varepsilon_{1}  \quad\textnormal{whenever} \quad |t-t_{0}|+|\xi-\xi_{0}|<\delta_{1}  
\end{equation}
for any $t$ and $s$.
\vspace{0.2cm}

On the other hand, by using the continuity of the solutions with respect of the parameters \cite[Ch.V]{Hartman} combined with Proposition \ref{10}, we know that for any $\varepsilon_2>0$, there exist $\delta_2(t_0,\xi_0,\varepsilon_2)>0$ such that

\begin{equation}\label{crt2}
    |z^*(s;(t,\xi))-z^*(s;(t_0,\xi_0))|<\varepsilon_2 \hspace{0.3cm} \text{whenever} \hspace{0.3cm} |t-t_0|+|\xi-\xi_0|<\delta_2
\end{equation}
for any $t$ and $s.$ 

Additionally, we know for any $\varepsilon_3>0$ there exists $\delta_3(\varepsilon_3,t_0)$ such that

\begin{equation}\label{crt3}
    \|\Phi(t,s)-\Phi(t_0,s)\|<\varepsilon_3 \hspace{0.3cm} \text{whenever} \hspace{0.3cm} |t-t_0|<\delta_3
\end{equation}
for any $t$ and $s.$ 

From now on, we will assume that $t$,$s$ and $t_{0}$ are in a compact interval $I\subset \mathbb{R}^{+}$ and we denote
\begin{displaymath}
\omega_{1}(s,t,t_{0},\xi,\xi_{0})=f(s,x(s,t,\xi)+z^{*}(s;(t,\xi)))-f(s,x(s,t_{0},\xi_{0})+z^{*}(s;(t_{0},\xi_{0}))).
\end{displaymath}

We will assume that $t>t_{0}$.  Now, we can notice that

\begin{align*}
    H(t,\xi)-H(t_0,\xi_0)=\ &\xi-\xi_0+\dint_0^t\Phi(t,s)f(s,x(s,t,\xi)+z^*(s;(t,\xi)))ds\\
    &\hspace{0.2cm}-\dint_0^{t_0}\Phi(t_0,s)f(s,x(s,t_0,\xi_0)+z^*(s;(t_0,\xi_0)))ds\\
    =\ &\xi-\xi_0 +\dint_0^{t_0}\{\Phi(t,s)-\Phi(t_0,s)\}f(s,x(s,t,\xi)+z^*(s;(t,\xi)))ds\\
    &\hspace{0.2cm} +\dint_0^{t_0}\Phi(t_0,s)\omega_1(s,t,t_0,\xi,\xi_0)ds\\
    &\hspace{0.2cm}+\dint_{t_0}^{t}\Phi(t,s)f(s,x(s,t,\xi)+z^*(s;(t,\xi)))ds.
\end{align*}

Let $C=\max\{\|\Phi(u,s)\|: u,s \in I\}$.  By using \textbf{(P3)} and the fact that $f(t,x)$ is continuous in $(t,x) \in \R^+\times \R^n$ %and bounded for any $t \in I$
, we have that $f(s,x(s,t,\xi)+z^*(s;(t,\xi)))$ is bounded on $I$. Thus, there exists $\rho \geq 0$ such that

\begin{equation}\label{crt4}
    |f(s,x(s,t,\xi)+z^*(s;(t,\xi)))|\leq \rho \  \text{for any} \ t,s \in I.
\end{equation}

By using (\ref{crt1}), (\ref{crt2}), (\ref{crt3}) and (\ref{crt4}), we have that

\begin{align*}
    |H(t,\xi)-H(t_0,\xi_0)|&\leq |\xi-\xi_0|+\rho\dint_0^{t_0}\|\Phi(t,s)-\Phi(t_0,s)\|ds + \rho\dint_{t_0}^t\|\Phi(t,s)\|ds\\
    &\hspace{0.2cm} +\dint_0^{t_0}\|\Phi(t_0,s)\| |\omega_1(s,t,t_0,\xi,\xi_0)|ds\\
    &\leq |\xi-\xi_0|+\rho t_0\varepsilon_3+C|t-t_0|\rho +C \gamma(\varepsilon_1+\varepsilon_2)t_0.
\end{align*}

The case that $t<t_0$ is analogous to previous analysis. Therefore, we conclude that $H$ is continuous in any $(t_0,\xi_0)\in \R^+ \times \R^n$.

Now, we show that $G$ is continuous in any $(t_0,\eta_0) \in \R^+ \times \R^n$. By using the continuity of the solutions with respect to parameters together with Proposition \ref{10} we know that for any $\varepsilon_4 > 0$ there exists $\delta_4(t_0,\xi_0,\varepsilon_4)>0$ such that

\begin{equation}\label{crt5}
    |y(s,t,\eta)-y(s,t_0,\eta_0)|<\varepsilon_4 \hspace{0.3cm} \text{when} \hspace{0.3cm} |t-t_0|+|\eta-\eta_0|<\delta_4.
\end{equation}

In the case $t_0<t,$ we obtain

\begin{align*}
    G(t,\eta)-G(t_0,\eta_0)=&\eta-\eta_0-\dint_0^t\Phi(t,s)f(s,y(s,t,\eta))ds+\dint_0^{t_0}\Phi(t_0,s)f(s,y(s,t_0,\eta_0))ds\\
    =&\eta-\eta_0-\dint_0^{t_0}\Phi(t,s)f(s,y(s,t,\eta))ds-\dint_0^{t_0}\Phi(t_0,s)f(s,y(s,t,\eta))ds\\
    \hspace{0.5cm}&+\dint_0^{t_0}\Phi(t_0,s)f(s,y(s,t,\eta))ds+
    \dint_0^{t_0}\Phi(t_0,s)f(s,y(s,t_0,\eta_0))ds\\
    \hspace{0.5cm}&-\dint_{t_0}^t\Phi(t,s)f(s,y(s,t,\eta))ds\\
    =& \eta-\eta_0+\dint_0^{t_0}\{\Phi(t_0,s)-\Phi(t,s)\}f(s,y(s,t,\eta))ds-
    \dint_{t_0}^t\Phi(t,s)f(s,y(s,t,\eta))ds\\
    \hspace{0.3cm}&+\dint_0^{t_0}\Phi(t_0,s)\{f(s,y(s,t_0,\eta_0))-
    f(s,y(s,t,\eta))\}ds,
\end{align*}
and by using (\ref{crt3}), (\ref{crt4}) and (\ref{crt5}) we have that

\begin{align*}
    |G(t,\eta)-G(t_0,\eta_0)|\leq& |\eta-\eta_0|+\rho\dint_0^{t_0}\|\Phi(t,s)-\Phi(t_0,s)\|ds+\rho\dint_{t_0}^t\|\Phi(t,s)\|ds\\
    \hspace{0.5cm}&+\dint_0^{t_0}\|\Phi(t_0,s)\| |f(s,y(s,t,\eta))-
    f(s,y(s,t_0,\eta_0))|ds\\
    \leq&|\eta-\eta_0|+\rho t_0\varepsilon_3+\rho|t-t_0|C+C\gamma\varepsilon_4.
\end{align*}

The case $t < t_0$ is similar. Therefore, we conclude that $G$ is continuous in any $(t_0,\eta_0) \in \R^+ \times \R^n.$ 

 \end{proof}
 
 \begin{remark}
 In Theorem 2.3 of \cite{CMR} is obtained a similar bound for  $|H(t,\xi)-H(t_0,\xi_0)|$ which depends on the bound of $\abs{f(t,x)}.$ As this hypothesis is dropped in this work, we have that the bound for this estimation depends on the boundness of the $\abs{f(s,x(s,t,\xi)+z^*(s;(t,\xi)))}$ for any $t \in I.$
 \end{remark}
 
  \section{Differentiability of Topological Equivalence}
  
 As before, we recall Definition \cite[Definition 1.5]{CMR} and we adapt it to the context of this work.
  
 \begin{definition}
\label{TopEqCr}
The systems \textnormal{(\ref{lin})} and \textnormal{(\ref{nolin})} are $C^{r}-$ continuously topologically equivalent on $\mathbb{R}^+$ if: 
\begin{itemize}
\item[(i)] The systems are $\mathbb{R}^+-$continuously topologically equivalent;
\item[(ii)] for any fixed $t\in \mathbb{R}^+$; the map $u\mapsto H(t,u)$ is a $C^{r}$--diffeomorphism of $\mathbb{R}^{n}$;
with $r\geq 1$,
\item[(iii)] the partial derivatives of $H$ and $G$ up to order $r$ with respect to $u$ are continuous functions of $(t,u)\in \mathbb{R}^+\times \mathbb{R}^{n}$. 
\end{itemize}
\end{definition}

The following is the main result of this work. We prove that the topological equivalence, established in Theorem \ref{teorema1} is of class $C^r, r \geq 1$ on the half line. 
 
 \begin{theorem}\label{teorema 3} Assume that \textbf{(P1)-(P5)}, {\bf{(N)}} and $K \gamma<\alpha$ are verified, then (\ref{lin}) and (\ref{nolin}) are $C^r-$ continuous topologically equivalent on $\R^+$.
 \end{theorem}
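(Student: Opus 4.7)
The plan is to establish items (ii) and (iii) of Definition \ref{TopEqCr}, since item (i) has already been proved in the previous section. The main task is to show that $\xi\mapsto z^{*}(t;(t,\xi))$ and $\eta\mapsto w^{*}(t;(t,\eta))$ admit partial derivatives up to order $r$ that are continuous in $(t,\xi)$ and $(t,\eta)$ respectively; once this is done, the $C^{r}$-diffeomorphism property for fixed $t$ will follow by combining the homeomorphism property of Theorem \ref{teorema1} with the inverse function theorem applied to the identity $G(t,\cdot)=H(t,\cdot)^{-1}$.

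First I would handle $G$, which is the simpler case because $G(t,\eta)=\eta-\int_{0}^{t}\Phi(t,s)f(s,y(s,t,\eta))\,ds$ depends on $\eta$ only through the flow $y(s,t,\eta)$ of (\ref{nolin}). By \textbf{(P5)} and the classical smooth dependence theorem, $\eta\mapsto y(s,t,\eta)$ is $C^{r}$ with derivatives satisfying the first and higher order variational equations along $y(\cdot,t,\eta)$; combining this with the Lipschitz control of $f$ in \textbf{(P2)}, the exponential decay \textbf{(P1)} and the uniform bound of Remark \ref{para los betas}, I can differentiate under the integral sign and, by a Fa\`a di Bruno/induction argument, obtain that $D_{\eta}^{k}G$ exists and is continuous in $(t,\eta)\in\mathbb{R}^{+}\times\mathbb{R}^{n}$ for $1\leq k\leq r$.

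Second, and more delicately, I would handle $H(t,\xi)=\xi+z^{*}(t;(t,\xi))$ through the iteration scheme (\ref{induccion}). Each $z_{j}^{*}$ is $C^{r}$ in $\xi$: the base step $z_{0}^{*}$ is $C^{r}$ since $\xi\mapsto x(s,t,\xi)$ is smooth and $f$ is $C^{r}$ in its spatial variable by \textbf{(P5)}; the inductive step yields an integral formula for $D_{\xi}^{k}z_{j+1}^{*}$ involving $D_{\xi}^{k}z_{j}^{*}$ and derivatives of $f$ composed with $x(s,t,\xi)+z_{j}^{*}(s;(t,\xi))$. The plan is to show, analogously to Lemmas \ref{bounded} and \ref{zj uc}, that the sequences $\{D_{\xi}^{k}z_{j}^{*}\}_{j}$ are Cauchy uniformly on compact subsets of $\xi$ by using the contraction $K\gamma<\alpha$ in the integral kernel $\Phi(t,s)$; this produces continuous limits which must coincide with $D_{\xi}^{k}z^{*}$ since $z_{j}^{*}\to z^{*}$ uniformly on $\mathbb{R}^{+}$. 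The main obstacle is exactly the unboundedness of $f$ assumed in \textbf{(P4)}: unlike the bounded framework in \cite{CMR}, one cannot run the differentiability argument in the single Banach space $BC(\mathbb{R}^{+},\mathbb{R}^{n})$ using global sup-norms, because the higher derivatives of $f$ need not be uniformly bounded on $\mathbb{R}^{n}$. The natural fix is to work locally in $\xi$ on compact sets $\mathcal{K}\subset\mathbb{R}^{n}$, use Lemma \ref{bounded} to confine $x(s,t,\xi)+z_{j}^{*}(s;(t,\xi))$ to a bounded region for $\xi\in\mathcal{K}$, and exploit continuity of the derivatives of $f$ on that region (which yields the required uniform bounds on $\mathcal{K}$). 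Once the $C^{r}$ regularity of $H$ and $G$ is established together with continuity of all partial derivatives in $(t,\xi)$, $(t,\eta)$, Definition \ref{TopEqCr} is satisfied and the theorem follows.
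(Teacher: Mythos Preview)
Your outline is plausible and the localization idea you sketch for $H$ can in principle be carried out, but the route is genuinely different from---and harder than---the paper's. The paper never differentiates $H$ directly through the fixed-point iteration (\ref{induccion}). Instead it works only with $G$: an integration-by-parts identity using the variational equation (\ref{MDE1}) collapses the integral to the closed form
\[
\frac{\partial G}{\partial\eta}(t,\eta)=\Phi(t,0)\,\frac{\partial y}{\partial\eta}(0,t,\eta),
\]
which immediately shows $\det DG\neq 0$ (a product of two fundamental matrices). Hadamard's global inversion theorem, fed by the properness $|G(t,\eta)|\to\infty$ already secured in Theorem \ref{teorema1}, then yields the global $C^{1}$-diffeomorphism, and the $C^{r}$ regularity of $H=G(t,\cdot)^{-1}$ is read off from $DH(t,\xi)=[DG(t,H(t,\xi))]^{-1}$ and its higher-order analogues, with no direct analysis of $z^{*}$ whatsoever. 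Your approach avoids Hadamard's theorem (once both $H$ and $G$ are shown to be $C^{r}$ and mutually inverse, the chain rule forces both Jacobians to be invertible), but pays for it with the Cauchy-on-compacta analysis of $D_{\xi}^{k}z_{j}^{*}$ that you outline; the paper's approach pays with Hadamard but entirely sidesteps the unboundedness obstacle you correctly flag, since every higher derivative of $G$ reduces to $\Phi(t,0)$ times a derivative of the flow $\eta\mapsto y(0,t,\eta)$, where only continuity of $D^{k}f$ on the finite interval $[0,t]$ is needed and no global bounds on $D^{k}f$ enter.
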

 
 \begin{proof} The property (i) of Definition \ref{TopEqCr} is satisfied by Theorem \ref{teorema1}.  The property (ii) of Definition previously mentioned will be established by cases.
 
 \medskip
 
 \noindent{\bf{Case $r=1$.}} 
 
 %By one hand, it is well known that (see \emph{e.g.}, Theorem 4.1 from \cite[Ch.V]{Hartman}) if $y\mapsto f(t,y)$ is $C^{r}$ with $r\geq 1$, then the map
%$\eta \mapsto y(t,\tau,\eta)$ is also $C^{r}$ for any  fixed couple $(t,\tau)$. Then, as $y\mapsto f(t,y)$ is $C^{1}$, it follows that $y\mapsto Df(t,y)$ and  $\eta \mapsto \partial y/\partial \eta$ are continuous.
The expression for the first partial derivatives of the map $\eta\mapsto G(t,\eta)$ for any $t\geq 0$ are the following
\begin{equation}
\label{derivada-parcial}
\frac{\partial G}{\partial \eta_{i}}(t,\eta)=e_{i}-\int_{0}^{t}\Phi(t,s)Df(s,y(s,t,\eta))\frac{\partial y}{\partial \eta_{i}}(s,t,\eta)\,ds \quad (i=1,\ldots,n), 
\end{equation}
where $Df$ is the Jacobian matrix of $f,$ which implies that the partial derivatives exists and are continuous for any fixed $t\geq 0$, then $\eta \mapsto G(t,\eta)$ is $C^{1}$.

On the other hand, by using (\textbf{P5}) we have that
$\partial y(t,\tau,\eta)/\partial \eta$ satisfies the matrix differential equation
\begin{equation} 
\label{MDE1}
\left\{
\begin{array}{rcl}
\displaystyle \frac{d}{dt}\frac{\partial y}{\partial\eta}(t,\tau,\eta)&=&\displaystyle \{A(t)+Df(t,y(t,\tau,\eta))\}\frac{\partial y}{\partial \eta}(t,\tau,\eta),\\\\
\displaystyle \frac{\partial  y}{\partial\eta}(\tau,\tau,\eta)&=&I_n.
\end{array}\right.
\end{equation}

Now, by using the identity $\Phi(t,s)A(s)=-\frac{\partial }{\partial s}\Phi(t,s)$ combined with
(\ref{MDE1}) we can deduce that for any $t\geq 0$, the Jacobian matrix is given by
\begin{equation}
    \label{machine}
\begin{array}{rcl}
\displaystyle\frac{\partial G}{\partial\eta}(t,\eta)&=&  \displaystyle I_n-\int_{0}^{t}\Phi(t,s)Df(s,y(s,t,\eta))
\frac{\partial y}{\partial\eta}(s,t,\eta)\,ds \\\\
&=&I_n - \displaystyle \int_{0}^{t}\frac{d}{ds}\left\{\Phi(t,s)\frac{\partial y}{\partial \eta}(s,t,\eta)\right\}\,ds \\\\
&=&\displaystyle \Phi(t,0)\frac{\partial y(0,t,\eta)}{\partial \eta},
\end{array}
\end{equation}
and Theorems 7.2 and 7.3 from  \cite[Ch.1]{Coddington}  imply that
$Det \frac{\partial G(t,\eta)}{\partial \eta}>0$ for any $t\geq 0$. 

Finally, by using Hadamard's Theorem (see \cite{Plastock}), due to Theorem  \ref{teorema1} we know that $|G(t,\eta)|\to +\infty$ as $|\eta|\to +\infty$ combined with the fact that $\eta \mapsto G(t,\eta)$ is $C^{1}$ and its Jacobian matrix has a non vanishing determinant, it follows that $\eta \mapsto G(t,\eta)$ is a global diffeomorphism for any fixed $t\geq 0.$

\medskip
 
\noindent{\bf{Case $r = 2$.}} Due to \textbf{(P5)}  we can verify that
the second partial derivatives $\partial^{2}y(s,\tau,\eta)/\partial \eta_{j}\partial\eta_{i}$  satisfy
 the system of differential equations
\begin{equation}
\label{MDE15}
\left\{
\begin{array}{rcl}
\displaystyle \frac{d}{dt}\frac{\partial^{2}y}{\partial\eta_{j}\partial\eta_{i}}&=&\displaystyle
\{A(t)+Df(t,y)\}\frac{\partial^{2}y}{\partial\eta_{j}\partial\eta_{i}}
+D^{2}f(t,y)\frac{\partial y}{\partial\eta_{j}}\frac{\partial y}{\partial\eta_{i}} \\\\
\displaystyle \frac{\partial^{2}y}{\partial\eta_{j}\partial\eta_{i}}  &=&0,
\end{array}\right.
\end{equation}
 for any $i,j=1,\ldots,n,$ where $D^2 f$ is the formal second derivative of $f$ and  $y=y(t,\tau,\eta).$ By using (\ref{derivada-parcial}) and (\ref{MDE15}) we have
\begin{displaymath}
\begin{array}{rcl}
\displaystyle \frac{\partial^{2} G}{\partial\eta_{j}\partial \eta_{i}}(t,\eta)&=&\displaystyle -\int_{0}^{t}\Phi(t,s)D^{2}f(s,y(s,t,\eta))\frac{\partial y}{\partial \eta_{j}}(s,t,\eta)\frac{\partial y}{\partial \eta_{i}}(s,t,\eta)\,ds\\\\
& & \displaystyle
-\int_{0}^{t}\Phi(t,s)Df(s,y(s,t,\eta))\frac{\partial^{2} y(s,t,\eta)}{\partial\eta_{j}\partial\eta_{i}}\,ds\\\\
%&=&\displaystyle -\int_{0}^{t}\frac{d}{ds}\left\{\Phi(t,s)
%\frac{\partial^{2} y(s,t,\eta)}{\partial\eta_{j}\partial\eta_{i}}\right\}\,ds\\\\
&=&\displaystyle  \Phi(t,0)\frac{\partial^{2} y(0,t,\eta)}{\partial\eta_{j}\partial\eta_{i}}.

\end{array}
\end{displaymath}  
Thus,  the map $\eta \mapsto G(t,\eta)$ is $C^{2}$ for any fixed $t\geq 0$. The identity $\xi=G(t,H(t,\xi))$ for any fixed $t\in \mathbb{R}^{+}$, the Jacobian matrix of the identity map on $\mathbb{R}^{n}$ can be seen as
\begin{displaymath}
DG(t,H(t,\xi))DH(t,\xi)=I_n \quad \textnormal{for any fixed $t\in \mathbb{R}^{+}$}.
\end{displaymath}

By Case $r=1$, we have that $\eta\mapsto G(t,\eta)$ is a diffeormorphism of class $C^{1}$ for any fixed $t\in \mathbb{R}^{+}$, which implies that
\begin{equation}
\label{Jaco2}
DH(t,\xi)=[DG(t,H(t,\xi))]^{-1} \quad \textnormal{for any $t\in \mathbb{R}^{+}$}
\end{equation}
is well defined. In addition, note that $(t,\xi) \mapsto DH(t,\xi)$ is continuous since the maps $A\mapsto A^{-1}$ and $(t,\xi) \mapsto DG(t,H(t,\xi))$ are continuous for any $A\in Gl_{n}(\mathbb{R})$ and $(t,\xi)\in \mathbb{R}^{+}\times \mathbb{R}^{n}$.

Now, differentiating again with respect to the second variable, we have the formal computation
\begin{displaymath}
D^{2}G(t,H(t,\xi))DH(t,\xi)DH(t,\xi)+DG(t,H(t,\xi))D^{2}H(t,\xi)=0
\end{displaymath}
and the identity (\ref{Jaco2}) implies that
\begin{equation}\label{Jaco3}
   D^{2}H(t,\xi)=-DH(t,\xi)D^{2}G(t,H(t,\xi))DH(t,\xi)DH(t,\xi).
\end{equation}

It is easy to see that $D^{2}H(t,\xi)$ is continuous with respect to $(t, \xi)$ due to is a composition of maps that are continuous with respect to $(t, \xi).$ Therefore $\eta \mapsto G(t,\eta)$ is a global diffeomorphism of class $C^2$ for any fixed $t\geq 0.$

\medskip

\noindent{\bf{Case $r \geq 3 $.}} By using \textbf{(P5)}  we can  conclude that $\eta \mapsto y(0,t,\eta)$ is $C^{r}$and the partial derivatives
\begin{displaymath}
(t,\eta)\mapsto \frac{\partial^{|m|} y(0,t,\eta)}{\partial\eta_{1}^{m_{1}}\cdots \partial \eta_{n}^{m_{n}}}, \quad \textnormal{where $|m|=m_{1}+\ldots+m_{n}\leq r$},
\end{displaymath}
are continuous for any $(t,\eta)\in \mathbb{R}^{+}\times\mathbb{R}^{n}$. Moreover, this fact combined with (\ref{machine}) shows that the partial derivatives up to order $r$--th  of $G$ with respect to $\eta$
\begin{displaymath}
(t,\eta)\mapsto \frac{\partial^{|m|} G(t,\eta)}{\partial\eta_{1}^{m_{1}}\cdots \partial \eta_{n}^{m_{n}}}=\Phi(t,0)\frac{\partial^{|m|} y(0,t,\eta)}{\partial\eta_{1}^{m_{1}}\cdots \partial \eta_{n}^{m_{n}}}, \quad \textnormal{where $|m|=m_{1}+\ldots+m_{n}\leq r$},
\end{displaymath}
are continuous in $\mathbb{R}^{+}\times \mathbb{R}$. Additionally, the higher formal derivatives of $H$ up to order $r-$th and its continuity on $\mathbb{R}^{+}\times \mathbb{R}^{n}$ can be deduced in a recursive way from (\ref{Jaco2}) and (\ref{Jaco3}).

\medskip

The property (iii) of Definition \ref{TopEqCr} is immersed in the previous analysis. Therefore, the result follows.
 \end{proof}
 
 \begin{remark}
  As we previously  emphasized, this result improves two facts: i) In \cite{Lin2}, F. Lin showed that the linearization between (\ref{lin}) and (\ref{nolin}) is of class $C^0$ when $f(t,0)$ is bounded, while we show that it is of class $C^r$ with $r \geq 1$ adding the fact that $f$ is bounded on $t$ and scapes to infinity as $x$ goes to infinity. ii) In \cite{CMR}, the authors proved that the linearization of class $C^r$ with $r \geq 1$ when the nonlinearity is considered is bounded, while we proved that it has the same class of regularity, however we regard unbounded nonlinearities. 
 \end{remark}
 
 \begin{remark}
 Although we follow the lines of the proof \cite[Th. 4.1]{CMR}, we pointed out that in that result it is proved that $|G(t,\eta)|\to +\infty$ as $|\eta|\to +\infty$ in order to use Hadamard's result,  while in Theorem \ref{teorema 3}  this previous fact is immediate due to Theorem \ref{teorema1} which is a consequence that we are using a weaker definition of topological equivalence than the one used in \cite{CMR}.
 \end{remark}

\end{document}